\newtheorem{example}{Example}
\newtheorem{remark}{Remark}
\newcommand{\K}{\mathbb{K}}
\newcommand{\N}{\mathbb{N}}
\newcommand{\R}{\mathbb{R}}
\newcommand{\X}{\mathbb{X}}
\newcommand{\Y}{\mathbb{Y}}
\newcommand{\Z}{\mathbb{Z}}
\newcommand{\cF}{\mathcal{F}}
\newcommand{\calH}{\mathcal{H}}
\newcommand{\cW}{\mathcal{W}}
\newcommand{\eps}{\varepsilon}
\newcommand{\be}{\begin{equation}}
\newcommand{\ee}{\end{equation}}
\newcommand{\st}{\,|\,}
\newcommand{\bS}{{\boldsymbol{S}}}
\newcommand{\blambda}{{\boldsymbol{\lambda}}}
\newcommand{\bmu}{{\boldsymbol{\mu}}}
\newcommand{\loc}{\mathrm{loc}}
\newcommand{\argmin}{\operatorname{argmin}}
\title{Numerics and Fractals\thanks{This research was partially supported under Australian Research Council's Discovery 
             Projects funding scheme (project number DP130101738) and the Technische 
             Universit\"at M\"unchen – Institute for Advanced Study, funded by the German Excellence Initiative.} }
\author{Michael F. Barnsley\thanks{Mathematical Sciences Institute, The Australian National University, Canberra, ACT, Australia}
\and Markus Hegland\thanks{Mathematical Sciences Institute, The Australian National University, Canberra, ACT, Australia and Institute for Advanced Study, Technische
Universit\"at M\"unchen, Germany}
\and Peter Massopust\thanks{Centre of Mathematics, M6, Technische Universit\"at M\"unchen, Germany}}
\begin{document}

\maketitle

\begin{abstract}
  Local iterated function systems are an important generalisation of the standard (global) iterated function systems (IFSs). For a particular class of mappings, their fixed points are the graphs of local fractal functions and these functions themselves are known to be the 
  fixed points of an associated Read-Bajactarevi\'c operator. This paper establishes existence and properties of local fractal
  functions and discusses how they are computed. In particular, it is shown that piecewise polynomials are a
  special case of local fractal functions. Finally, we develop a method to compute the components of a local IFS
  from data or (partial differential) equations.
\end{abstract}

\begin{keywords} 
Iterated function system, local iterated function system, attractor, code space, 
fractal function, fractal imaging, fractal compression, subdivision schemes
\end{keywords}

\begin{AMS}
28A80, 33F05, 41A05, 65D05
\end{AMS}

\pagestyle{myheadings}
\thispagestyle{plain}
\markboth{BARNSLEY, HEGLAND AND MASSOPUST}{NUMERICS AND FRACTALS}

\section{Introduction}

Contractive operators on function spaces are important for the
development of both the theory and algorithms for the solution of
integral and differential equations. They are used in the theory of
elliptic partial differential equations, Fredholm integral equations of the second kind,
Volterra integral equations, and ordinary differential equations. This
is just a small selection of instances were they appear in mathematics.
Contractive operators are fundamental for the development of iterative
solvers in general and wavelet-based solvers for elliptic
problems \cite{CohDD01} in particular.

One class of contractive operators is defined on the graphs of
functions using a special kind of iterated function system (IFS). The fixed point 
of such an IFS is the graph of a fractal function. There is a vast 
literature on IFSs, see for example the recent review by the first 
author~\cite{B0}.
Computationally, IFSs are used in Computer Graphics in refinement
methods which effectively compute points on curves and surfaces~\cite{CavDM91}.
They are also used to compute function values of piecewise polynomial
functions and wavelets. In fact, it can be shown that these applications
use a variant of IFSs where the iterated functions are defined locally~\cite{barnhurd}.
These local IFSs and, in particular, their computational application
are the topic of the following discussion. In this first manuscript we will
mostly consider functions of one real variable in the examples. Functions
of multiple variables are planned to be covered in a future paper.

The remaining part of this introduction will provide some further background
and motivation for our approach to utilise IFSs or local IFSs in computations.
In the second section we introduce and review local IFSs. 
The third section applies local IFSs to graphs of functions
to define local fractal functions. It will be seen that these functions are
the fixed points of a Read--Bajactarevi\'c  (RB) operator. (See also \cite{mas1} for the use of such
operators in the theory of (global) fractal functions.) Section 4 provides a 
reformulation of the RB operator in terms of matrices acting on vectors of function
values over grids. Several examples of local fractal functions are then displayed.
In Section 5 we discuss the  important case of polynomials and their RB operators.
In a penultimate section we discuss the determination of (approximate) iterated
function systems both from data and from functional equations such as partial
differential equations. We conclude this discussion with some general remarks 
and in particular with a connection between fractals and the active research area
of tensor approximation.

\subsection{Fractals and numerics}

One can show that graphs of piecewise polynomial functions can be written as the 
fixed points of local IFSs. Thus the popular finite element method approximates 
solutions of PDEs with particular fractal functions. However, numerical methods
do not usually use IFSs directly. Exceptions are the subdivision schemes
used in computer graphics where (local) IFSs are employed mostly for the 
representation of smooth curves and surfaces. 

We suggest the construction and use of IFSs for the solution of PDEs. This is done 
by choosing an initial IFS and then changing it iteratively until it approximates 
a desired function given by either data or functional (e.g. partial differential) 
equations. We use ideas based on the collage theorem to fit a given function class 
and refine the domains of the IFS if necessary.

In the following we will discuss the numerical application of local IFSs
which is not based on a basis of a linear space but on the IFS itself.
We anticipate that this approach has the following advantages over 
approaches that are based on a linear basis:
\begin{itemize}
\item The same approach can be used to approximate and solve PDEs on very general
  grids defined by IFSs including fractal sets.
\item Visualisation and numerical solutions are computed simultaneously
  and can be done on the same or on neighboring processors of a multiprocessor
  system such that communication overhead may be reduced.
\item
  Dimensionality is handled much more flexibly in fractals -- for
  example, one may use 1D solvers for higher-dimensional problems.
\item
  We can at the same time adapt the basis functions (or frames) as well
  as solving the problem. Searches over large collections of
  dictionaries of finite dimensional approximation spaces can be done
  locally during the solution.
\item
  The computational complexity is bounded by the resolution one
  requires.
\item
  Adaptivity is naturally included as in wavelet-based methods and is
  a consequence of the iteration  -- one application of the IFS reduces
  the finest scale.
\item
  Convergence of the method can be controlled with few parameters and is
  driven by the convergence of the IFS.
\item
  The theory is based on the theory for fractals and IFSs which is well
  established. In addition, there has been a lot of work on wavelets and
  subdivision schemes which provides further firm foundations.
\end{itemize}

\subsection{The Collage Theorem}

While it is usually assumed that the iterated function system (IFS) is given, a
very important class of methods used in image processing determines the
IFS from its fixed point. An important result used
here is the Collage Theorem \cite{B}. For the purposes of self-containment, we state
this theorem below.
\begin{theorem}
Let $(\X,d_\X)$ be a complete metric space. Denote by $(\calH (\X), d_\calH)$ the associated complete metric space based on the hyperspace of nonempty compact subsets of $\X$ endowed with the Hausdorff metric $d_\calH$. Let $M\in\calH(\X)$ and $\varepsilon > 0$ be given. Suppose that $\cF := \{\X; f_1, \ldots, f_N\}$ is a contractive IFS such that
\[
d_\calH \left(M, \;\bigcup_{i=1}^N f_i (M) \right) < \varepsilon.
\]
Then
\[
d_\calH (M, A) < \frac{\varepsilon}{1-s},
\]
where $A$ is the attractor of the IFS and  and $s := \max\{\mathrm{Lip}\,f_i\st i = 1, \ldots, N\}$.
\end{theorem}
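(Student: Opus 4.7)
The plan is to deduce the bound from the Banach fixed point theorem applied to the Hutchinson operator associated with the IFS $\cF$. First, I would introduce the operator $F:\calH(\X) \to \calH(\X)$ defined by
\[
F(B) := \bigcup_{i=1}^N f_i(B).
\]
Since each $f_i$ is a contraction with Lipschitz constant at most $s<1$, two standard properties of the Hausdorff metric, namely $d_\calH(f(B),f(C)) \leq \mathrm{Lip}(f)\,d_\calH(B,C)$ and $d_\calH(B_1\cup B_2,\,C_1\cup C_2) \leq \max\{d_\calH(B_1,C_1),d_\calH(B_2,C_2)\}$, imply that $F$ is itself a contraction on the complete metric space $(\calH(\X),d_\calH)$ with contraction factor $s$. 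The Banach fixed point theorem then yields a unique fixed point $A\in\calH(\X)$, and this $A$ is precisely the attractor of $\cF$.

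The desired inequality then follows from a short triangle inequality argument. Using $A=F(A)$, I would write
\[
d_\calH(M,A) \leq d_\calH(M,F(M)) + d_\calH(F(M),F(A)) \leq d_\calH(M,F(M)) + s\,d_\calH(M,A).
\]
Invoking the hypothesis $d_\calH(M,F(M))<\varepsilon$ and rearranging gives the conclusion
\[
d_\calH(M,A) < \frac{\varepsilon}{1-s}.
\]

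The main technical obstacle is really just justifying the two properties of the Hausdorff metric listed above, which are needed for the contractivity of $F$; everything else is a direct consequence of the Banach fixed point theorem and one application of the triangle inequality. Once those two lemmas on the Hausdorff metric are in hand, the proof is essentially immediate. I would also briefly note that completeness of $(\calH(\X),d_\calH)$, which is required for invoking the Banach fixed point theorem, follows from completeness of $(\X,d_\X)$ by a standard result and so may be cited rather than reproved.
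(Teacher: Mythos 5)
Your argument is correct: it is the standard proof of the Collage Theorem via the Hutchinson operator $F(B)=\bigcup_{i=1}^N f_i(B)$, whose contractivity on $(\calH(\X),d_\calH)$ with factor $s$ follows from the two Hausdorff-metric lemmas you cite, after which the triangle inequality $d_\calH(M,A)\leq d_\calH(M,F(M))+s\,d_\calH(M,A)$ gives the bound. The paper itself states this theorem without proof (deferring to the reference for \emph{Fractals Everywhere}), so there is nothing to compare against; your route is the canonical one and is complete once the two metric lemmas are justified, as you note.
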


It has been demonstrated that approaches that are based on the Collage Theorem lead to very efficient image compression methods. The interested reader is referred to \cite{barnhurd,fisher} for methodologies and to \cite{vrscay} for a summery of fractal-type approaches in an analytical setting. Note that the application of an IFS starts with points on a large scale and then
moves to finer scales. This is very similar to some multigrid methods
and wavelet methods.

\section{Local Iterated Function Systems}
The concept of \textit{local} iterated function system is a generalization of an IFS as defined in \cite{B} and was first introduced in \cite{barnhurd}.

In the following, $(\X, d_\X)$ denotes a complete metric space with metric $d_\X$ and $\N := \{1, 2, 3,\ldots\}$ the set of positive integers. 

\begin{definition}\label{localIFS}
Let $N\in \N$ and let $\N_N := \{1, \ldots, N\}$. Suppose $\{\X_i \st i \in \N_N\}$ is a family of nonempty subsets of $\X$. Further assume that for each $\X_i$ there exists a continuous mapping $f_i:\X_i\to\X$, $i\in \N_N$. Then $\cF_{\loc} := \{\X; (\X_i, f_i)\st i \in \N_N\}$ is called a \textbf{local iterated function system} (local IFS).
\end{definition}

Note that if each $\X_i = \X$, then Definition \ref{localIFS} coincides with the usual definition of a standard (global) IFS on a complete metric space. However, the possibility of choosing the domain for each continuous mapping $f_i$ different from the entire space $\X$ adds additional flexibility as will be recognized in the sequel.

A mapping $f: U\subset \X \to \X$ is called \textbf{contractive on $U$} or \textbf{a contraction on $U$} if there exists a constant $s\in [0,1)$ so that
\[
d_\X (f(x_1), f(x_2)) \leq s \,d_\X(x_1, x_2), \quad\forall\,x_1,x_2\in \X.
\]

\begin{definition}
A local IFS $\cF_{\loc}$ is called \textbf{contractive} if there exists a metric $d'$ equivalent to $d_\X$ with respect to which 
all functions $f\in \cF_{\loc}$ are contractive (on their respective domains).
\end{definition}

Let $2^\X := \{S \st S \subseteq \X\}$ be the power set of $\X$. With a local IFS we associate a set-valued operator $\cF_\loc : 2^\X \to 2^\X$ by setting
\be\label{hutchop}
\cF_\loc(S) := \bigcup_{i=1}^N f_i (S\cap \X_i).
\ee

Here $f_i(S\cap \X_i) = \{f_i(x)\mid x\in S\cap \X_i\}$.
By a slight abuse of notation, we use the same symbol for a local IFS and its associated operator.

\begin{definition}
A subset $A\in 2^\X$ is called a \textbf{local attractor} for the local IFS $\{\X; (\X_i, f_i)\st i \in \N_N\}$ if
\be\label{attr}
A = \cF_\loc (A) = \bigcup_{i=1}^N f_i (A\cap \X_i).
\ee
\end{definition}
In \eqref{attr} we allow for $A\cap \X_i$ to be the empty set. Thus, every local IFS has at least one local attractor, namely $A = \emptyset$. However, it may also have many distinct ones. In the latter case, if $A_1$ and $A_2$ are distinct local attractors, then $A_1\cup A_2$ is also a local attractor. Hence, there exists a largest local attractor for $\cF_\loc$, namely the union of all distinct local attractors. We refer to this largest local attractor as {\em the} local attractor of a local IFS $\cF_\loc$.

We remark that there exists an alternative definition for \eqref{hutchop}. For given functions $f_i$ which are only defined on
$\X_i$ one could introduce set functions (which will also be called $f_i$) which are defined on $2^\X$ by
\[
f_i (S) := \begin{cases} f_i (S\cap \X_i), & S\cap \X_i\neq \emptyset;\\ \emptyset, & S\cap \X_i = \emptyset,\end{cases}  \qquad i\in \N_N, \; S\in 2^\X.
\]
On the left-hand side $f_i(S\cap \X_i)$ is the set of values of the original $f_i$ as in the previous definition. 
This extension of a given function $f_i$ to sets $S$ which include elements which are not in the domain of $f_i$ basically
just ignores those elements. In the following we will assume this definition of the set function $f_i$ to be used. 

In the case where $\X$ is compact and the $\X_i$, $i\in \N_N$ closed, i.e., compact in $\X$, and where the local IFS $\{\X; (\X_i, f_i)\st i \in \N_N\}$ is contractive, the local attractor may be computed as follows. Let $K_0:= \X$ and set
\[
K_n := \cF_\loc (K_{n-1}) = \bigcup_{i\in \N_N} f_i (K_{n-1}\cap \X_i), \quad n\in \N.
\]
Then $\{K_n\st n\in\N_0\}$ is a decreasing nested sequence of compact sets. \textit{If} each $K_n$ is nonempty, then by the Cantor Intersection Theorem,
\[
K:= \bigcap_{n\in \N_0} K_n \neq \emptyset.
\]
Using \cite[Proposition 3 (vii)]{lesniak}, we see that
\[
K = \lim_{n\to\infty} K_n,
\]
where the limit is taken with respect to the Hausdorff metric on $\calH(\X)$. This implies that
\[
K = \lim_{n\to\infty} K_n = \lim_{n\to\infty} \bigcup_{i\in \N_N} f_i (K_{n-1}\cap \X_i) = \bigcup_{i\in \N_N} f_i (K\cap \X_i) = \cF_\loc (K). 
\]
Thus, $K = A_\loc$. A (mild) condition guaranteeing that each $K_n$ is nonempty is that $f_i(\X_i) \subset\X_i$, $i\in \N_N$. (See also \cite{barnhurd}.)

In the above setting where the $f_i$ have been extended to $2^\X$, one can derive a relation between the local attractor $A_\loc$ of a contractive local IFS $\{\X; (\X_i, f_i)\st i \in \N_N\}$ and the (global) attractor $A$ of the associated (global) IFS $\{\X; f_i\st i \in \N_N\}$ where the extensions of $f_i$ to all sets are defined as above. To this end, let the sequence $\{K_n\st n\in \N_0\}$ be defined as above. The unique attractor $A$ of the IFS $\cF:= \{\X; f_i\st i \in \N_N\}$ is obtained as the fixed point of the set-valued map $\cF: \calH(\X)\to \calH(\X)$, 
\be\label{setvalued}
\cF (B) = \bigcup_{i\in \N_N} f_i (B),
\ee
where $B\in \calH(\X)$. If the IFS $\cF$ is contractive, then the set-valued mapping \eqref{setvalued} is contractive on $\calH(\X)$ (with respect to the Hausdorff metric) and its fixed point can be obtained as the limit of the sequence of sets $\{A_n\st n\in \N_0\}$ with $A_0 := \X$ and 
\[
A_n := \cF(A_{n-1}), \quad n\in \N.
\]
Note that $K_0 = A_0 = \X$ and, assuming that $K_{n-1}\subseteq A_{n-1}$, $n\in\N$, it follows by induction that
\begin{align*}
K_n & = \bigcup_{i\in \N_N} f_i (K_{n-1}\cap \X_i) \subseteq \bigcup_{i\in \N_N} f_i (K_{n-1}) \subseteq \bigcup_{i\in \N_N} f_i (A_{n-1}) = A_n.
\end{align*}
Hence, upon taking the limit with respect to the Hausdorff metric as $n\to\infty$, we obtain $A_\loc \subseteq A$. This proves the next result.

\begin{proposition}
Let $\X$ be a compact metric space and let $\X_i$, $i\in \N_N$, be closed, i.e., compact in $\X$. Suppose that the local IFS $\cF_\loc := \{\X; (\X_i, f_i)\st i \in \N_N\}$ and the IFS $\cF:=\{\X; f_i\st i \in \N_N\}$ are both contractive. Then the local attractor $A_\loc$ of $\cF_\loc$ is a subset of the attractor $A$ of $\cF$. 
\end{proposition}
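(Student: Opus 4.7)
The plan is essentially to formalize the chain of inclusions already sketched in the paragraph preceding the proposition, and to verify that nothing breaks when we pass to the Hausdorff limit. I would start by fixing the two monotone iteration schemes with the same seed: set $K_0 = A_0 = \X$, and define $K_n := \cF_\loc(K_{n-1})$ and $A_n := \cF(A_{n-1})$. Compactness of $\X$ and closedness of the $\X_i$ guarantee that each $K_n$ and each $A_n$ is compact (as a finite union of continuous images of compact sets), so both sequences live in $\calH(\X)\cup\{\emptyset\}$.

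The core of the argument is an induction showing $K_n \subseteq A_n$ for all $n \in \N_0$. The base case is trivial since $K_0 = A_0 = \X$. For the inductive step, I would just use the two observations $S \cap \X_i \subseteq S$ and monotonicity of each $f_i$ on subsets, combined with the inductive hypothesis $K_{n-1} \subseteq A_{n-1}$:
\[
K_n = \bigcup_{i\in \N_N} f_i(K_{n-1}\cap \X_i) \subseteq \bigcup_{i\in \N_N} f_i(K_{n-1}) \subseteq \bigcup_{i\in \N_N} f_i(A_{n-1}) = A_n.
\]

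Next I would recall that, by the Cantor intersection argument already given in the preceding paragraph, $K_n \to A_\loc$ in the Hausdorff metric when each $K_n$ is nonempty, while the contractivity of $\cF$ on $(\calH(\X), d_\calH)$ together with the Banach fixed point theorem gives $A_n \to A$. Since the containment $K_n \subseteq A_n$ is preserved under Hausdorff limits of compact sets (if $x \in A_\loc$, pick $x_n \in K_n \subseteq A_n$ with $x_n \to x$, then use closedness of $A$ together with $d_\calH(A_n,A)\to 0$), the conclusion $A_\loc \subseteq A$ follows.

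The one mildly delicate point, which I would treat as the main obstacle, is the degenerate case in which some $K_n$ is empty: then the Cantor/Hausdorff limit argument for $\cF_\loc$ does not literally apply as stated. But in that situation $A_\loc$ is forced to be empty (every local attractor satisfies $A = \cF_\loc^n(A) \subseteq K_n = \emptyset$), so the inclusion $A_\loc \subseteq A$ is trivial. Thus the argument splits cleanly into the generic nonempty case, handled by the induction plus Hausdorff limit, and the degenerate case, handled by a one-line observation.
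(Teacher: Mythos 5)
Your proposal follows essentially the same route as the paper: the induction $K_n\subseteq A_n$ starting from $K_0=A_0=\X$, followed by passage to the Hausdorff limit. Your explicit treatment of the degenerate case where some $K_n$ is empty (forcing $A_\loc=\emptyset$) is a small but welcome addition that the paper leaves implicit.
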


Contractive local IFSs are point-fibered if $\X$ is compact and the $\X_i$, $i\in \N_N$, are closed. To show this, define the code space of a local IFS by $\Omega:= \prod_{n\in\N}\N_N$ and endow it with the product topology $\mathfrak{T}$. It is known that $\Omega$ is metrizable and that $\mathfrak{T}$ is induced by the Fr\'echet metric $d_F: \Omega\times\Omega\to \R$,
\[
d_F(\sigma,\tau) := \sum_{n\in \N} \frac{|\sigma_n - \tau_n|}{(N+1)^n},
\]
where $\sigma = (\sigma_1\ldots\sigma_n\ldots)$ and $\tau = (\tau_1\ldots\tau_n\ldots)$. (As a reference, see for instance \cite{Engelking}, Theorem 4.2.2.) The elements of $\Omega$ are called codes.

Define a set-valued mapping $\gamma :\Omega \to \K(\X)$, where $\K(\X)$ denotes the hyperspace of all compact subsets of $\X$, by
\[
\gamma (\sigma) := \bigcap_{n=1}^\infty f_{\sigma_1}\circ \cdots \circ f_{\sigma_n} (\X),
\]
where $\sigma = (\sigma_1\ldots\sigma_n\ldots)$. Then $\gamma (\sigma)$ is point-fibred, i.e., a singleton. Moreover, in this case, the local attractor $A$ equals $\gamma(\Omega)$. (For details regarding point-fibred IFSs, we refer the interested reader to \cite{K}, Chapters 3--5.) 

\begin{example}
Let $\X := [0,1]\times [0,1]$ and suppose that $0 < x_2 < x_1 < 1$ and $0 < y_2 < y_1 < 1$. Define
\[
\X_1 := [0,x_1]\times [0,y_1]\qquad\text{and}\qquad \X_2 := [x_2,1]\times [y_2,1].
\]
Furthermore, let $f_i:\X_i \to \X$, $i=1,2$, be given by
\[
f_1(x,y) := (s_1 x, s_1 y)\quad\text{and}\quad f_2(x,y) := (s_2 x + (1-s_2) x_2, s_2 y + (1-s_2)y_2),
\]
respectively, where $s_1,s_2\in [0,1)$.

The (global) IFS $\{\X; f_1, f_2\}$ has as its unique attractor the line segment $A = \{(x, \frac{y_2}{x_2}\, x)\st 0\leq x \leq x_2\}$. The local attractor $A_\loc$ of the local IFS $\{\X; (\X_1, f_1), (\X_2, f_2)\}$ is the union of the fixed point $(0,0)$ of $f_1$ and the fixed point $(x_2,y_2)$ of $f_2$.
\end{example}
\section{Local Fractal Functions}\label{locfracfun}
In this section, we exhibit a class of special attractors of local IFSs, namely local attractors that are the graphs of bounded functions. These functions will be called \textbf{local fractal functions}. We prove that the set of discontinuities of these bounded functions is countably infinite and we derive conditions under which local fractal functions are elements of the Lebesgue spaces $L^p$. 

To this end, we assume that $1 < N\in \N$ and set $\N_N := \{1, \ldots, N\}$. Let $\X$ be a nonempty connected set and $\{\X_i \st i \in\N_N\}$ a family of nonempty connected subsets of $\X$. Suppose $\{u_i : \X_i\to \X \st i \in \N_N\}$ is a family of bijective mappings with the property that
\begin{enumerate}
\item[(P)] $\{u_i(\X_i)\st i \in\N_N\}$ forms a (set-theoretic) partition $\X$, i.e., $\X = \bigcup_{i=1}^N u_i(\X_i)$ and $u_i(\X_i)\cap u_j(\X_j) = \emptyset$, for all $i\neq j\in \N_N$.
\end{enumerate}
\noindent
Now suppose that $(\Y,d_\Y)$ is a complete metric space with metric $d_\Y$. A mapping $f:\X\to \Y$ is called \textbf{bounded} (with respect to the metric $d_\Y$) if there exists an $M> 0$ so that for all $x_1, x_2\in \X$, $d_\Y(f(x_1),f(x_2)) < M$.

Denote by $B(\X, \Y)$ the set
\[
B(\X, \Y) := \{f : \X\to \Y \st \text{$f$ is bounded}\}.
\]
Endowed with the metric 
\[
d(f,g): = \displaystyle{\sup_{x\in \X}} \,d_\Y(f(x), g(x)),
\] 
$(B(\X, \Y), d)$ becomes a complete metric space. Similarly, we define $B(\X_i, \Y)$, $i \in\N_N$.

\begin{remark}
Note that under the usual addition and scalar multiplication of functions, the spaces $B(\X_i,\Y)$ and $B(\X,\Y)$ become metric linear spaces. A \textit{metric linear space} is a vector space endowed with a metric under which the operations of vector addition and scalar multiplication are continuous.
\end{remark}

For $i \in \N_N$, let $v_i: \X_i\times \Y \to \Y$ be a mapping that is uniformly contractive in the second variable, i.e., there exists an $\ell\in [0,1)$ so that for all $y_1, y_2\in \Y$
\be\label{scon}
d_\Y (v_i(x, y_1), v_i(x, y_2)) \leq \ell\, d_\Y (y_1, y_2), \quad\forall x\in \X.
\ee
Define a Read-Bajactarevi\'c (RB) operator $\Phi: B(\X,\Y)\to \Y^{\X}$ by
\be\label{RB}
\Phi f (x) := \sum_{i=1}^N v_i (u_i^{-1} (x), f_i\circ u_i^{-1} (x))\,\chi_{u_i(\X_i)}(x), 
\ee
where $f_i := f\vert_{\X_i}$ and 
$$
\chi_M (x) := \begin{cases} 1, & x\in M\\ 0, & x\notin M\end{cases}.
$$
Note that $\Phi$ is well-defined and since $f$ is bounded and each $v_i$ contractive in the second variable, $\Phi f\in B(\X,\Y)$.

Moreover, by \eqref{scon}, we obtain for all $f,g\in B(\X, \Y)$ the following inequality:
\begin{align}\label{estim}
d(\Phi f, \Phi g) & = \sup_{x\in \X} d_\Y (\Phi f (x), \Phi g (x))\nonumber\\
& = \sup_{x\in \X} d_\Y (v(u_i^{-1} (x), f_i(u_i^{-1} (x))), v(u_i^{-1} (x), g_i(u_i^{-1} (x))))\nonumber\\
& \leq \ell\sup_{x\in \X} d_\Y (f_i\circ u_i^{-1} (x), g_i \circ u_i^{-1} (x)) \leq \ell\, d_\Y(f,g).
\end{align}
To simplify notation, we set $v(x,y):= \sum_{i=1}^N v_i (x, y)\,\chi_{\X_i}(x)$ in the above equation. In other words, $\Phi$ is a contraction on the complete metric space $B(\X,\Y)$ and, by the Banach Fixed Point Theorem, has therefore a unique fixed point $f^*$ in $B(\X,\Y)$. This unique fixed point will be called a {\em local fractal function}  $f^* = f^*_\Phi$ (generated by $\Phi$).

Next, we would like to consider a special choice for mappings $v_i$. To this end, we require the concept of an $F$-space. We recall that a metric $d:\Y\times\Y\to \R$ is called \textbf{complete} if every Cauchy sequence in $\Y$ converges with respect to $d$ to a point of $\Y$, and \textbf{translation-invariant} if $d(x+a,y+a) = d(x,y)$, for all $x,y,a\in \Y$.

\begin{definition}
A topological vector space $\Y$ is called an \textbf{$\boldsymbol{F}$-space} if its topology is induced by a complete translation-invariant metric $d$.
\end{definition}

Now suppose that $\Y$ is an $F$-space. Denote its metric by $d_\Y$. We define mappings $v_i:\X_i\times\Y\to \Y$ by
\be\label{specialv}
v_i (x,y) := \lambda_i (x) + S_i (x) \,y,\quad i \in \N_N,
\ee
where $\lambda_i \in B(\X_i,\Y)$ and $S_i : \X_i\to \R$ is a function.

If in addition we require that the metric $d_\Y$ is homogeneous, that is,
\[
d_\Y(\alpha y_1, \alpha y_2) = |\alpha| d_\Y(y_1,y_2), \quad \forall \alpha\in\R\;\forall y_1.y_2\in \Y,
\]
then $v_i$ given by \eqref{specialv} satisfies condition \eqref{scon} provided that the functions $S_i$ are bounded on $\X_i$ with bounds in $[0,1)$ for then
\begin{align*}
d_\Y (\lambda_i (x) + S_i (x) \,y_1,\lambda_i (x) + S_i (x) \,y_2) &= d_\Y(S_i (x) \,y_1,S_i (x) \,y_2) \\
& = |S_i(x)| d_\Y (y_1, y_2)\\
& \leq \|S_i\|_{\infty,\X_i}\, d_\Y (y_1, y_2)\\
& \leq s\,d_\Y (y_1, y_2).
\end{align*}
Here, $\|\bullet\|_{\infty, \X_i}$ denotes the supremum norm with respect to $\X_i$ and $s := \max\{\|S_i\|_{\infty,\X_i}\st$ $i\in \N_N\}$.

Thus, for a fixed set of functions $\{\lambda_1, \ldots, \lambda_N\}$ and $\{S_1, \ldots, S_N\}$, the associated RB operator \eqref{RB} has now the form
\[
\Phi f = \sum_{i=1}^N \lambda_i\circ u_i^{-1} \,\chi_{u_i(\X_i)} + \sum_{i=1}^N (S_i\circ u_i^{-1})\cdot (f_i\circ u_i^{-1})\,\chi_{u_i(\X_i)},
\]
or, equivalently,
\[
\Phi f_i\circ u_i = \lambda_i + S_i\cdot f_i, \quad \text{on $\X_i$, $\forall\;i\in\N_N$,}
\]
with $f_i = f\vert_{\X_i}$.

\begin{theorem}
Let $\Y$ be an $F$-space with homogeneous metric $d_\Y$. Let $\X$ be a nonempty connected set and $\{\X_i \st i \in\N_N\}$ a family of nonempty connected subsets of $\X$. Suppose $\{u_i : \X_i\to \X \st i \in \N_N\}$ is a family of bijective mappings satisfying property $\mathrm{(P)}$.

Let $\blambda := (\lambda_1, \ldots, \lambda_N)\in \underset{i=1}{\overset{N}{\times}} B(\X_i,\Y)$, and $\bS := (S_1, \ldots, S_N)\in \underset{i=1}{\overset{N}{\times}} B (\X_i,\R)$. Define a mapping $\Phi: \left(\underset{i=1}{\overset{N}{\times}} B(\X_i,\Y)\right)\times \left(\underset{i=1}{\overset{N}{\times}} B (\X_i,\R)\right) \times B(\X,\Y)\to B(\X,\Y)$ by
\be\label{eq3.4}
\Phi(\blambda)(\bS) f = \sum_{i=1}^N \lambda_i\circ u_i^{-1} \,\chi_{u_i(\X_i)} + \sum_{i=1}^N (S_i\circ u_i^{-1})\cdot (f_i\circ u_i^{-1})\,\chi_{u_i(\X_i)}.
\ee
If $\max\{\|S_i\|_{\infty,\X_i}\st i\in \N_N\} < 1$ then the operator $\Phi(\blambda)(\bS)$ is contractive on the complete metric space $B(\X, \Y)$ and its unique fixed point $f^*$ satisfies the self-referential equation
\be\label{3.4}
f^* = \sum_{i=1}^N \lambda_i\circ u_i^{-1} \,\chi_{u_i(\X_i)} + \sum_{i=1}^N (S_i\circ u_i^{-1})\cdot (f^*_i\circ u_i^{-1})\,\chi_{u_i(\X_i)},
\ee
or, equivalently
\be
f^*\circ u_i = \lambda_i + S_i\cdot f^*_i, \quad \text{on $\X_i$, $\forall\;i\in\N_N$,}
\ee
where $f^*_i = f^*\vert_{\X_i}$.

This fixed point $f^*$ is called a \textbf{local fractal function}.
\end{theorem}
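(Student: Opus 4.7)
The proof consists of four routine steps once the setup is in hand.

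First, I would verify that $\Phi(\blambda)(\bS)$ maps $B(\X,\Y)$ into itself. Property (P) guarantees that for every $x \in \X$ there is a unique index $i(x) \in \N_N$ with $x \in u_{i(x)}(\X_{i(x)})$, so the sum in \eqref{eq3.4} collapses to a single summand and $\Phi f$ is pointwise well-defined. For boundedness, fix any $y_0 \in \Y$ and use the triangle inequality together with translation-invariance and homogeneity of $d_\Y$ to estimate
\[
d_\Y\bigl(\Phi f(x), y_0\bigr) \le |S_{i(x)}(u_{i(x)}^{-1}(x))|\,d_\Y(f_{i(x)}(u_{i(x)}^{-1}(x)), 0) + d_\Y(\lambda_{i(x)}(u_{i(x)}^{-1}(x)), y_0) + C,
\]
where $C$ absorbs a fixed constant from comparing $0$ and $y_0$. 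Since each $\lambda_i$ is bounded, each $S_i$ is bounded, and $f$ is bounded, the right-hand side is uniformly bounded in $x$; hence $\Phi f \in B(\X,\Y)$.

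Second, I would establish contractivity. For $f,g \in B(\X,\Y)$ and $x \in \X$, again using property (P) to pick out the unique index $i = i(x)$, translation-invariance of $d_\Y$ eliminates the common $\lambda_i\circ u_i^{-1}(x)$ term and homogeneity pulls out $|S_i\circ u_i^{-1}(x)|$:
\begin{align*}
d_\Y(\Phi f(x), \Phi g(x))
&= d_\Y\bigl(S_i(u_i^{-1}(x))\,f_i(u_i^{-1}(x)),\; S_i(u_i^{-1}(x))\,g_i(u_i^{-1}(x))\bigr) \\
&= |S_i(u_i^{-1}(x))|\,d_\Y\bigl(f_i(u_i^{-1}(x)),\,g_i(u_i^{-1}(x))\bigr) \\
&\le s\, d(f,g),
\end{align*}
where $s := \max_i \|S_i\|_{\infty,\X_i} < 1$. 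Taking the supremum over $x$ yields $d(\Phi f, \Phi g) \le s\,d(f,g)$, so $\Phi(\blambda)(\bS)$ is an $s$-contraction on the complete metric space $(B(\X,\Y),d)$. This mirrors the computation already given in \eqref{estim}.

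Third, an application of the Banach Fixed Point Theorem yields the unique $f^* \in B(\X,\Y)$ satisfying $f^* = \Phi(\blambda)(\bS)f^*$. Writing this out gives exactly equation \eqref{3.4}. To derive the second form, fix any $i \in \N_N$ and evaluate \eqref{3.4} at $u_i(x)$ for $x \in \X_i$: by property (P) only the $i$-th summand survives (since $u_i(x) \in u_i(\X_i)$ and the sets $u_j(\X_j)$ are pairwise disjoint), and the bijectivity of $u_i$ makes $u_i^{-1}(u_i(x)) = x$, yielding $f^*\circ u_i(x) = \lambda_i(x) + S_i(x)\cdot f^*_i(x)$ for all $x \in \X_i$.

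The only real subtlety I foresee is careful bookkeeping around property (P): one must use that $\{u_i(\X_i)\}$ is a genuine partition (not merely a cover) so that the characteristic functions are orthogonal in the sense that at each point only one summand contributes. This keeps the contraction estimate clean, because otherwise cross-terms from different indices would obstruct the direct application of translation-invariance and homogeneity. Everything else is mechanical once the $F$-space hypothesis is invoked.
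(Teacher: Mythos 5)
Your proposal is correct and follows essentially the same route as the paper: the paper's proof simply points back to the preceding discussion, which consists of exactly the steps you spell out (well-definedness and boundedness of $\Phi f$ via property (P), the contraction estimate using translation-invariance and homogeneity of $d_\Y$ to isolate $|S_i|$, and the Banach Fixed Point Theorem, with the second form of the fixed-point equation obtained by evaluating at $u_i(x)$). Your write-up is just a more explicit version of what the paper leaves implicit.
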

\begin{proof}
The statements follow directly from the considerations preceding the theorem.
\end{proof}

\begin{remark}
Note that the local fractal function $f^*$ generated by the operator defined by \eqref{eq3.4} does not only depend on the family of subsets $\{\X_i \st i \in \N_N\}$ but also on the two $N$-tuples of bounded functions $\blambda\in \underset{i=1}{\overset{N}{\times}} B(\X_i,\Y)$, and $\bS\in \underset{i=1}{\overset{N}{\times}} B (\X_i,\R)$. The fixed point $f^*$ should therefore be written more precisely as $f^* (\blambda)(\bS)$. However, for the sake of notational simplicity, we usually suppress this dependence for both $f^*$ and $\Phi$.
\end{remark}

The following result found in \cite{GHM} and in more general form in \cite{M97} is the extension to the setting of local fractal functions.
\begin{theorem}\label{thm3.3}
The mapping $\blambda \mapsto f^*(\blambda)$ defines a linear isomorphism from $\underset{i=1}{\overset{N}{\times}} B(\X_i,\Y)$ to $B(\X,\Y)$.
\end{theorem}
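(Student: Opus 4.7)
The plan is to exploit the fact that for a fixed tuple $\bS$ with $\max_i \|S_i\|_{\infty,\X_i} < 1$, the operator in \eqref{eq3.4} decomposes as
\[
\Phi(\blambda)(\bS)\, f \;=\; L(\blambda) \;+\; M(\bS)\, f,
\]
where $L(\blambda) := \sum_{i=1}^N \lambda_i\circ u_i^{-1}\,\chi_{u_i(\X_i)}$ depends linearly on $\blambda$ and is independent of $f$, while $M(\bS)\,f := \sum_{i=1}^N (S_i\circ u_i^{-1})\cdot(f_i\circ u_i^{-1})\,\chi_{u_i(\X_i)}$ depends linearly on $f$. The fixed-point equation \eqref{3.4} then reads $f^*(\blambda) = L(\blambda) + M(\bS)\,f^*(\blambda)$, and linearity, injectivity, and surjectivity each follow from short formal arguments that use only uniqueness of the fixed point and property $\mathrm{(P)}$.

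For linearity, I would take $\blambda^{(1)}, \blambda^{(2)}\in \underset{i=1}{\overset{N}{\times}} B(\X_i,\Y)$ and scalars $\alpha,\beta$, and verify directly that $\alpha f^*(\blambda^{(1)}) + \beta f^*(\blambda^{(2)})$ satisfies the self-referential equation associated with $\alpha\blambda^{(1)} + \beta\blambda^{(2)}$; this is an immediate bookkeeping step using the $\blambda$-linearity of $L$ and the $f$-linearity of $M(\bS)$. Uniqueness of the fixed point of $\Phi(\alpha\blambda^{(1)}+\beta\blambda^{(2)})(\bS)$, guaranteed by the preceding theorem, then forces $f^*(\alpha\blambda^{(1)}+\beta\blambda^{(2)}) = \alpha f^*(\blambda^{(1)}) + \beta f^*(\blambda^{(2)})$.

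For injectivity, suppose $f^*(\blambda) = 0$. Since the images $\{u_i(\X_i) : i\in\N_N\}$ partition $\X$ by property $\mathrm{(P)}$, restricting \eqref{3.4} to the cell $u_i(\X_i)$ yields $\lambda_i\circ u_i^{-1} = 0$ there, and bijectivity of $u_i$ gives $\lambda_i = 0$ on $\X_i$ for every $i\in\N_N$. Combined with linearity, this establishes injectivity.

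For surjectivity, given $g\in B(\X,\Y)$, the natural candidate preimage is $\lambda_i := g\circ u_i - S_i\cdot g\vert_{\X_i}$ on $\X_i$, since then $\lambda_i + S_i\cdot g_i = g\circ u_i$ is precisely the self-referential equation satisfied by $g$, and uniqueness forces $g = f^*(\blambda)$. The main obstacle is the verification that each $\lambda_i$ actually lies in $B(\X_i,\Y)$: individually, $g\circ u_i$ is bounded because $g$ is, and $S_i\cdot g\vert_{\X_i}$ is bounded thanks to the homogeneity of $d_\Y$ and the boundedness of $S_i$, but to conclude that their difference is bounded one must invoke the translation invariance of $d_\Y$ (inherited from the $F$-space structure) in the form of an inequality $d_\Y(a-b,\,c-d) \leq d_\Y(a,c) + d_\Y(b,d)$. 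This is the one non-routine point of the argument; once it is in hand, the theorem is complete.
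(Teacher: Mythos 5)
Your proposal is correct and follows essentially the same route as the paper: linearity via uniqueness of the fixed point of $\Phi(\alpha\blambda^{(1)}+\beta\blambda^{(2)})(\bS)$, injectivity by reading $\blambda$ off the self-referential equation cell by cell using property $\mathrm{(P)}$, and surjectivity via the explicit preimage $\lambda_i := g\circ u_i - S_i\cdot g\vert_{\X_i}$. Your extra remark on checking $\lambda_i\in B(\X_i,\Y)$ using translation invariance and homogeneity of $d_\Y$ is a point the paper passes over silently, but it is a routine verification rather than a different approach.
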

\begin{proof}
Let $\alpha, \beta \in\R$ and let $\blambda, \bmu\in \underset{i=1}{\overset{N}{\times}} B(\X_i,\Y)$. Injectivity follows immediately from the fixed point equation \eqref{3.4} and the uniqueness of the fixed point: $\blambda = \bmu$ $\Longleftrightarrow$ $f^*(\blambda) = f^*(\bmu)$, .

Linearity follows from \eqref{3.4}, the uniqueness of the fixed point and injectivity: 
\begin{align*}
f^*(\alpha\blambda + \beta \bmu) & = \sum_{i=1}^N (\alpha\lambda_i + \beta \mu_i) \circ u_i^{-1} \,\chi_{u_i(\X_i)}\\
& \qquad  + \sum_{i=1}^N (S_i\circ u_i^{-1})\cdot (f_i^*(\alpha\blambda + \beta \bmu)\circ u_i^{-1})\,\chi_{u_i(\X_i)}
\end{align*}
and
\begin{align*}
\alpha f^*(\blambda) + \beta f^*(\bmu) & = \sum_{i=1}^N (\alpha\lambda_i + \beta \mu_i) \circ u_i^{-1} \,\chi_{u_i(\X_i)}\\
& \qquad  + \sum_{i=1}^N (S_i\circ u_i^{-1})\cdot (\alpha f_i^*(\blambda) + \beta f_i^*(\bmu))\circ u_i^{-1})\,\chi_{u_i(\X_i)}.
\end{align*}
Hence, $f^*(\alpha\blambda + \beta \bmu) = \alpha f^*(\blambda) + \beta f^*(\bmu)$.

For surjectivity, we define $\lambda_i := f^*\circ u_i - S_i \cdot f^*$, $i\in \N_N$. Since $f^*\in B(\X,\Y)$, we have $\blambda\in \underset{i=1}{\overset{N}{\times}} B(\X_i,\Y)$. Thus, $f^*(\blambda) = f^*$.
\end{proof}

We may construct local fractal functions on spaces other than $B(\X,\Y)$. To this end, we assume again that the functions $v_i$ are given by \eqref{specialv} and that $\X := [0,1]$ and $\Y := \R$. We consider the metric on $\R$ and $[0,1]$ as being induced by the $L^1$-norm. Note that endowed with this norm $B([0,1],\R)$ becomes a Banach space.


We have the following result for RB-operators defined on the Lebesgue spaces $L^p[0,1]$, $1\leq p \leq \infty$.

\begin{theorem}
Let $1<N\in \N$ and suppose that $\{\X_i \st i \in \N_N\}$ is a family of half-open intervals of $[0,1]$. Further suppose that $P := \{x_0 := 0 < x_1 < \cdots < x_N := 1\}$ is a partition of $[0,1]$ and that $\{u_i \st i \in\N_N\}$ is a family of affine mappings from $\X_i$ onto $[x_{i-1}, x_i)$, $i = 1, \ldots, N-1$, and from $\X_N^+ := \X_N\cup u_N^{-1}(1-)$ onto $[x_{N-1},x_N]$, where $u_N$ maps $\X_N$ onto $[x_{N-1}, x_N)$. 

The operator $\Phi: L^p [0,1]\to \R^{[a,b]}$ defined by
\be\label{Phi}
\Phi g := \sum_{i=1}^N (\lambda_i \circ u_i^{-1})\,\chi_{u_i(\X_i)} + \sum_{i=1}^N (S_i\circ u_i^{-1})\cdot (g_i\circ u_i^{-1})\,\chi_{u_i(\X_i)},
\ee
where $g_i = g\vert_{\X_i}$, $\lambda_i\in L^p (\X_i, [0,1])$ and $S_i\in L^\infty (\X_i, \R)$, $i \in\N_N$, maps $L^p [0,1]$ into itself. Moreover, if 
\be\label{condition}
\begin{cases}
\left(\displaystyle{\sum_{i=1}^N}\, a_i \,\|S_i\|_{\infty, \X_i}^p\right)^{1/p} < 1, & p\in[1,\infty);\\ \\
\max\left\{\|S_i\|_{\infty,\X_i}\st i\in\N_N\right\} < 1, & p = \infty,
\end{cases}
\ee
where $a_i$ denotes the Lipschitz constant of $u_i$, then $\Phi$ is contractive on $L^p [0,1]$ and its unique fixed point $f^*$ is an element of $L^p [0,1]$.
\end{theorem}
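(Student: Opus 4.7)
The plan is to estimate the $L^p$ norm of $\Phi g$ and of $\Phi g - \Phi h$ by exploiting the partition property of the images $u_i(\X_i)$ together with a change of variables on each piece. Since the $u_i(\X_i)$ partition $[0,1]$ up to a set of measure zero, the supports of the indicator functions $\chi_{u_i(\X_i)}$ are disjoint, and so
\[
\int_0^1 |\Phi g(x)|^p\,dx = \sum_{i=1}^N \int_{u_i(\X_i)} \bigl|\lambda_i(u_i^{-1}(x)) + S_i(u_i^{-1}(x))\,g(u_i^{-1}(x))\bigr|^p\,dx.
\]
In each term I would perform the substitution $y = u_i^{-1}(x)$. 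Because $u_i$ is affine with Lipschitz constant $a_i$, its derivative has constant modulus $a_i$, giving $dx = a_i\,dy$, so the integral on the right collapses to $a_i \int_{\X_i} |\lambda_i(y) + S_i(y) g(y)|^p\,dy$.

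From there, I would apply Minkowski's inequality (both pointwise in $i$ and on the resulting $\ell^p$-sum) together with the bound $|S_i(y)| \leq \|S_i\|_{\infty,\X_i}$ and the trivial inequality $\|g\|_{p,\X_i} \leq \|g\|_p$, valid since $\X_i \subseteq [0,1]$, to obtain
\[
\|\Phi g\|_p \leq \Bigl(\sum_{i=1}^N a_i \|\lambda_i\|_{p,\X_i}^p\Bigr)^{1/p} + \Bigl(\sum_{i=1}^N a_i \|S_i\|_{\infty,\X_i}^p\Bigr)^{1/p}\|g\|_p.
\]
Both terms are finite (the first since each $\lambda_i \in L^p(\X_i)$), which proves $\Phi g \in L^p[0,1]$.

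Next I would establish the contraction estimate. The affine terms cancel in $\Phi g - \Phi h$, so the same change of variables yields
\[
\|\Phi g - \Phi h\|_p^p = \sum_{i=1}^N a_i \int_{\X_i} |S_i(y)|^p |g(y) - h(y)|^p\,dy \leq \Bigl(\sum_{i=1}^N a_i \|S_i\|_{\infty,\X_i}^p\Bigr) \|g-h\|_p^p,
\]
which by hypothesis \eqref{condition} gives a Lipschitz constant strictly less than one. The case $p = \infty$ is handled analogously but more directly by taking essential suprema on each partition piece, yielding the bound $\max_i \|S_i\|_{\infty,\X_i}\,\|g-h\|_\infty$. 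An appeal to the Banach Fixed Point Theorem in the Banach space $L^p[0,1]$, which is complete for all $1 \leq p \leq \infty$, then produces the unique fixed point $f^* \in L^p[0,1]$.

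The main obstacle, though mild, is twofold: first, being careful that the domains $\X_i$ may overlap as subsets of $[0,1]$ (only the images $u_i(\X_i)$ are forced to be disjoint by property (P)), so that one must bound $\|g\|_{p,\X_i}$ by $\|g\|_p$ rather than attempting a clean partition sum on the domain side; second, correctly identifying the factor $a_i$ as the Jacobian of the change of variable $x \mapsto u_i^{-1}(x)$, which is precisely what makes condition \eqref{condition} the sharp one. Everything else reduces to standard $L^p$ manipulations and the contraction mapping principle.
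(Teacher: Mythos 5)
Your proposal is correct and follows essentially the same route as the paper: splitting the integral over the disjoint images $u_i(\X_i)$, changing variables with Jacobian factor $a_i$, bounding $|S_i|$ by its essential supremum to get the Lipschitz constant $\bigl(\sum_i a_i\|S_i\|_{\infty,\X_i}^p\bigr)^{1/p}$ (resp.\ $\max_i\|S_i\|_{\infty,\X_i}$ for $p=\infty$), and invoking the Banach Fixed Point Theorem. Your explicit Minkowski argument for well-definedness and your remark that one must use $\|g\|_{p,\X_i}\leq\|g\|_p$ rather than a partition sum on the domain side are slightly more careful than the paper's treatment, but the substance is identical.
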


\begin{proof}
Note that under the hypotheses on the functions $\lambda_i$ and $S_i$ as well as the mappings $u_i$, $\Phi f$ is well-defined and an element of $L^p[0,1]$. It remains to be shown that under condition \eqref{condition}, $\Phi$ is contractive on $L^p[0,1]
$. 

To this end, let $g,h \in L^p [0,1]$ and let $p\in [0,\infty)$. Then
\begin{align*}
\|\Phi g - \Phi h\|^p_{p} & = \int\limits_{[0,1]} |\Phi g (x) - \Phi h (x)|^p dx\\
& = \int\limits_{[0,1]} \left|\sum_{i=1}^{N} (S_i\circ u_i^{-1})(x) [(g_i\circ u_i^{-1})(x) - (h_i\circ u_i^{-1})(x)]\,\chi_{u_i(\X_i)}(x)\right|^p\, dx\\
& = \sum_{i=1}^{N}\,\int\limits_{[x_{i-1},x_i]}\left| (S_i\circ u_i^{-1})(x) [(g_i\circ u_i^{-1})(x) - (h_i\circ u_i^{-1})(x)]\right|^p\,dx\\
& = \sum_{i=1}^{N}\,a_i\,\int\limits_{\X_i} \left| S_i (x) [g_i(x)- h_i(x)]\right|^p\,dx\\
&  \leq \sum_{i=1}^{N}\,a_i\,\|S_i\|^p_{\infty, \X_i}\,\int\limits_{\X_i} \left| g_i(x) - h_i(x)\right|^p\,dx = \sum_{i=1}^{N}\,a_i\,\|S_i\|^p_{\infty, \X_i}\,\|f_i - g_i\|^p_{p,\X_i}\\
& = \sum_{i=1}^{N}\,a_i\,\|S_i\|^p_{\infty, \X_i}\,\|g_i - h_i\|^p_{p} \leq \left(\sum_{i=1}^{N}\,a_i\,\|S_i\|^p_{\infty, \X_i}\right) \|g - h\|^p_{p}.
\end{align*}
Now let $p= \infty$. Then
\begin{align*}
\|\Phi g - \Phi h\|_{\infty} & = \left\|\sum_{i=1}^{N} (S_i\circ u_i^{-1})(x) [(g_i\circ u_i^{-1})(x) - (h_i\circ u_i^{-1})(x)]\,\chi_{u_i(\X_i)}(x)\right\|_\infty\\
& \leq \max_{i\in\N_N}\,\left\| (S_i\circ u_i^{-1})(x) [(g_i\circ u_i^{-1})(x) - (h_i\circ u_i^{-1})(x)]\right\|_{\infty,\X_i}\\
& \leq \max_{i\in\N_N}\|S_i\|_{\infty,\X_i} \left\|g_i - h_i]\right\|_{\infty,\X_i} = \max_{i\in\N_N}\|S_i\|_{\infty,\X_i} \left\|g_i - h_i]\right\|_{\infty}\\
&  \leq \left(\max_{{i\in\N_N}}\,\|S_i\|_{\infty,\X_i}\right) \left\|g - h]\right\|_{\infty}
\end{align*}
These calculations prove the claims.
\end{proof}

\begin{remark}
The proof of the theorem shows that the conclusions also hold under the assumption that the family of mappings $\{u_i: \X_i\to \X\st i\in \N_N\}$ is generated by the following functions.
\begin{enumerate}
\item[$\mathrm{(i)}$] Each $u_i$ is a bounded diffeomorphism of class $C^k$, $k\in \N\cup\{\infty\}$, from $\X_i$ to $[x_{i-1}, x_i)$ (obvious modification for $i = N$). In this case, the $a_i$'s are given by $a_i = \sup\{\left\vert \frac{du_i}{dx} (x)\right\vert\st x$ $\in \X_i\}$, $i\in\N_N$.
\item[$\mathrm{(ii)}$] Each $u_i$ is a bounded invertible function in $C^\omega$, the class of real-analytic functions from $\X_i$ to $[x_{i-1}, x_i)$ and its inverse is also in $C^\omega$. (Obvious modification for $i = N$.) The $a_i$'s are given as above in item $\mathrm{(i)}$.
\end{enumerate}
\end{remark}

Next we investigate the set of discontinuities of the fixed point $f^*$ of the RB-operator \eqref{Phi}.

\begin{theorem}\label{discont}
Let $\Phi$ be given as in \eqref{Phi}. Assume that for all $i\in \N_N$ the $u_i$ are contractive and the $\lambda_i$ are continuous on $\overline{\X_i}$. Further assume that condition \eqref{condition} is satisfied for $p=\infty$ and that the fixed point $f^*$ is bounded everywhere. Then the set of discontinuities of $f^*$ is at most countably infinite.
\end{theorem}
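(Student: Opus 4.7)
The plan is to combine the Banach iteration with an inductive description of the potential discontinuities at each iterate, then pass to the uniform limit.

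I would start from $f_0 \equiv 0 \in B([0,1],\R)$ and set $f_n := \Phi^n f_0$. The calculation in the preceding theorem (with $p=\infty$) shows that $\Phi$ is a contraction on $(B([0,1],\R),\|\cdot\|_\infty)$, so $f_n\to f^*$ uniformly. Because uniform convergence preserves continuity at each point individually, it is enough to exhibit a countable set $E\subseteq[0,1]$ that contains every discontinuity of every $f_n$; then $f^*$ is continuous on $[0,1]\setminus E$.

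I would define the candidate set recursively by
\[
E_1 := \{x_1,\dots,x_{N-1}\}, \qquad E_{n+1} := E_1 \cup \bigcup_{i=1}^N u_i(E_n\cap \X_i).
\]
A straightforward induction gives $|E_n|\le N^n$, so $E:=\bigcup_{n\ge 1} E_n$ is at most countable, and $E_n\subseteq E_{n+1}$. The key inductive claim is that $f_n$ is continuous on $[0,1]\setminus E_n$. For $n=1$, each summand $\lambda_i\circ u_i^{-1}$ is continuous on $u_i(\X_i)=[x_{i-1},x_i)$ (using continuity of $\lambda_i$ on $\overline{\X_i}$ and the fact that the affine/diffeomorphic $u_i$ has continuous inverse), so the only possible discontinuities of $f_1$ are at the interior partition points in $E_1$, where the characteristic functions $\chi_{u_i(\X_i)}$ may jump. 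For the inductive step, fix $x\in[0,1]\setminus E_{n+1}$; since $x\notin E_1$, $x$ lies in the interior of a unique $u_i(\X_i)$, and $x\notin u_i(E_n\cap \X_i)$ forces $u_i^{-1}(x)\notin E_n$. Then the pointwise formula
\[
f_{n+1}(x) = \lambda_i(u_i^{-1}(x)) + S_i(u_i^{-1}(x))\,f_n(u_i^{-1}(x)),
\]
combined with continuity of $\lambda_i$, $S_i$, $u_i^{-1}$ at the relevant points and the inductive hypothesis applied to $f_n$ at $u_i^{-1}(x)$, yields continuity of $f_{n+1}$ at $x$. Finally, for $x\notin E$ every $f_n$ is continuous at $x$ and the uniform convergence $f_n\to f^*$ gives continuity of $f^*$ at $x$, so the discontinuity set of $f^*$ is contained in the countable set $E$.

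The main obstacle is that the inductive step relies on continuity of the multiplier functions $S_i$, which is not explicitly stated among the hypotheses (only the $\lambda_i$ are said to be continuous); I would treat this as an intended implicit assumption and flag it, since otherwise one can take $S_i\in L^\infty$ with $\|S_i\|_\infty<1$ but with uncountably many discontinuities (e.g.\ a two-valued step function on a set with large boundary), and these discontinuities propagate into $f^*$. A secondary, routine issue is the asymmetric half-open convention at $i=N$, which requires separate one-sided continuity checks at $x_0=0$ and $x_N=1$; contractivity of the $u_i$ is used only implicitly, to guarantee existence of $f^*$ via the preceding fixed-point framework.
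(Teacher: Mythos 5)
Your proof is correct in substance but takes a genuinely different route from the paper's. You prove that each Banach iterate $f_n$ is continuous off a finite set $E_n$ (generated by pushing the interior knots forward under the $u_i$) and then invoke the fact that uniform convergence preserves continuity at each individual point. The paper identifies essentially the same countable exceptional set $E$ from the iterates, but then argues directly on the fixed point: it iterates the oscillation inequality $\omega(f^*;u_i(I))\le s\,\omega(f^*;I)+\Lambda|I|$ along the address $\sigma$ of a point $x\notin E$ in the point-fibred local IFS $\{[0,1];(\X_i,u_i)\}$, obtaining $\omega(f^*;I_K)\le s^K\omega(f^*;J)+\Lambda|J|\,a^K/|a-s|$ on a nested sequence of intervals $I_K\ni x$ of length at most $a^K$. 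Your argument is more elementary: it needs no oscillation machinery, no code space, and no quantitative rate, so in particular it is insensitive to the degenerate case $a=s$ where the paper's displayed bound $a^K/|a-s|$ breaks down; contractivity of the $u_i$ enters only through the existence of $f^*$. What the paper's approach buys in exchange is an explicit geometric modulus of decay for the oscillation of $f^*$ at points off $E$, which is stronger information than bare continuity.

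The obstacle you flag is real, and it is worth noting that it is not specific to your method. Without continuity of the $S_i$ the theorem as literally stated fails: taking $N=2$, $u_1(x)=x/2$, $u_2(x)=(x+1)/2$, $\lambda_1=\lambda_2=1$, $S_2=0$ and $S_1=\tfrac12\chi_C$ with $C$ a fat Cantor set gives $f^*\equiv 1$ on $[1/2,1)$ and $f^*(y)=1+\tfrac12\chi_C(2y)$ on $[1/4,1/2)$, which is discontinuous on an uncountable set. The paper's own oscillation estimate silently assumes at least as much, since the step from the fixed point equation to $\omega(f^*;u_i(I))\le s\,\omega(f^*;I)+\Lambda|I|$ requires $\omega(S_i;I)=0$ (or Lipschitz control of $S_i$) and in fact uses $|\lambda_i(x_1)-\lambda_i(x_2)|\le\Lambda|I|$ with $\Lambda$ the sup norm, i.e., a Lipschitz-type bound on $\lambda_i$ rather than mere continuity. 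So your version, which needs only continuity of $S_i$ and $\lambda_i$, operates under hypotheses no stronger (indeed slightly weaker) than those the paper's proof implicitly uses; your explicit flagging of the missing assumption on $S_i$ is exactly the right thing to do.
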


\begin{proof}
Let $f$ be a real-valued function and $U$ a nonempty open interval contained in its domain. The oscillation of $f$ on $U$ is defined as 
\[
\omega (f; U) := \sup_{x\in U} f(x) - \inf_{x\in U} f(x) = \sup_{x_1,x_2\in U} |f(x_1) - f(x_2)|,
\] 
and the oscillation of a function $f$ at a point $x_0$ inside an open interval contained in its domain is defined by
\[
\omega (f; x_0) := \lim_{\delta\to 0} \omega (f; (x_0 - \delta, x_0 + \delta)), \quad \delta > 0.
\]
The Banach Fixed Point Theorem implies that we may start with any bounded function, say $f_0 = \chi_{[0,1]}$, to construct a sequence of iterates $f_n := \Phi f_{n-1}$, $n\in \N$, which under the given hypotheses, converge in the $L^\infty$--norm to the fixed point $f^*$. 

Each iterate $f_n$ may have finite jump discontinuities at the interior knots $\{x_j \st j = 1, \ldots, N-1\}$ of the partition $P$ and also at the images $u_{i_1}\circ u_{i_2}\circ \cdots \circ u_{i_{-1}}(x_j)$ of the interior knots. The number of possible discontinuities at level $n$ is bounded above by $N^{n-1} (N-1)$ since the sets $\X_i$ may only contain a subset of the interior knots. Denote by $E_n$ the finite set of all finite jump discontinuities at level $n$ and let $E:= \bigcup_{n\in \N} E_n$. Note that $E$ is at most countably infinite.

Let $x\in [0,1]\setminus E$ and let $\eps > 0$. The fixed point equation for $f^*$,
\[
f^*(u_i(x))  = \lambda_i (x) + S_i(x) f^*_i(x), \quad x\in\X_i,
\]
implies that for all intervals $I \subset \X_i$,
\[
\omega (f^*; u_i(I)) \leq s\,\omega (f^*; I) + \Lambda\,|I|,
\]
where $s:= \max\{\|S_i\|_{\infty,\X_i}\st i\in \N_N\} < 1$ and $\Lambda = \max_{i\in \N_N} \sup_{x\in \X_i} |\lambda_i (x)|$. Hence, for any finite code $\sigma |K:= \sigma_1\sigma_2\cdots\sigma_K\in \Omega' := \bigcup_{m=0}^\infty \N_N^m$ of length $K\in \N$, we have that
\begin{align}\label{estosc}
\omega (f^*; u_{\sigma|K} (I)) & \leq s^K \omega (f^*; I)\nonumber\\
& \qquad + \Lambda\,|I| \left(a_{\sigma_2\cdots\sigma_K} + s a_{\sigma_3\cdots\sigma_K} + \cdots + s^{K-2}a_K + s^{K-1}\right)\nonumber\\
& \leq s^K \omega (f^*; I) + \Lambda\,|I| \left(a^{K-1} + s a^{K-2} + \cdots + s^{K-2}a + s^{K-1}\right)\nonumber\\
& \leq s^K \omega (f^*; I) + \Lambda\,|I|\,\frac{a^K}{|a-s|}.
\end{align}
for all intervals $I\subset\X_i$. Here, $a:= \max\{a_i\st i\in\N_N\} < 1$.

Note that $\{\X; (\X_i, u_i)\st i \in \N_N\}$ is a contractive local IFS with attractor $[0,1]$. As $\{\X; (\X_i, u_i)\st i \in \N_N\}$ is point-fibered, there exists a code $\sigma\in \Omega = \N_N^\infty$ such that 
$$
\gamma (\sigma) = \{x\} = \bigcap_{k\in \N} u_{\sigma |k}(\X).
$$
Given any $K\in\N$ there exists a nonempty compact interval $I_K$ such that
\[
x\in I_K \subset \bigcap_{k=1}^K u_{\sigma |K} (\X).
\]
The length $|I_K|$ of $I_K$ is bounded above by $a^K$. Set $J := u_{\sigma| K}^{-1} (I_K)$, where $u^{-1}_{\sigma |K} := u_{\sigma_K}^{-1} \circ \cdots \circ u_{\sigma_1}^{-1}$.

Using \eqref{estosc} we obtain
\[
\omega (f^*; I_K) = \omega (f^*; u_{\sigma|K} (J)) \leq s^K \omega (f^*; J) + \Lambda\,|J|\,\frac{a^K}{|a-s|}.
\]
Since $f^*$ is bounded on $[0,1]$, $|J|\leq 1$, and $a_K\to 0$ as $K\to\infty$, we can choose a $K$ large enough so that $s^K \omega (f^*; J) < \eps/2$ and $\Lambda\,|J|\,a^K/|a-s| < \eps/2$. Thus, $\omega (f^*; I_K) < \eps$, which proves the continuity of $f^*$ at all points in $[0,1]\setminus E$ and completes the proof.
\end{proof}

\begin{corollary}
Under the assumptions of Theorem \ref{discont}, the fixed point $f^*$ of $\Phi$ is Riemann-integrable over $[0,1]$.
\end{corollary}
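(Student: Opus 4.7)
The plan is to invoke Lebesgue's classical criterion for Riemann integrability: a bounded function on a compact interval is Riemann-integrable if and only if the set of its discontinuities has Lebesgue measure zero. Both hypotheses needed for this criterion are essentially already in hand from Theorem \ref{discont}.

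First I would recall that by the standing hypotheses of Theorem \ref{discont}, the fixed point $f^*$ is assumed to be bounded everywhere on $[0,1]$, so the boundedness requirement of Lebesgue's criterion is satisfied. Next, I would invoke the conclusion of Theorem \ref{discont} itself, which asserts that the set $E \subset [0,1]$ of points at which $f^*$ fails to be continuous is at most countably infinite. Since every countable subset of $\R$ has Lebesgue measure zero (it can be covered by a union of intervals of arbitrarily small total length, for instance by placing around the $n$-th point an interval of length $\eps/2^n$), the set of discontinuities of $f^*$ has Lebesgue measure zero.

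Finally, applying Lebesgue's criterion on $[0,1]$ yields that $f^*$ is Riemann-integrable, completing the proof. There is no real obstacle here: the work has been done in Theorem \ref{discont}, and the corollary is essentially a one-line consequence once the Lebesgue characterization of Riemann integrability is cited. The only point requiring mild care is to ensure $f^*$ is genuinely defined and bounded pointwise (not merely as an $L^\infty$-equivalence class), which is precisely why the hypothesis \emph{$f^*$ is bounded everywhere} was inserted into Theorem \ref{discont}.
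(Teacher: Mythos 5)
Your proposal is correct and follows exactly the paper's route: the paper's proof is a one-line citation of the Lebesgue criterion for Riemann integrability (Theorem 7.5 in Oxtoby's \emph{Measure and Category}) combined with the countability of the discontinuity set from Theorem \ref{discont}. You have merely spelled out the measure-zero argument for countable sets that the cited reference encapsulates.
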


\begin{proof}
This is a direct consequence of the above theorem and, for instance, Theorem 7.5 in \cite{oxtoby}.
\end{proof}

Next, we exhibit the relation between the graph $G$ of the fixed point $f^*$ of the operator $\Phi$ given by \eqref{RB} and the local attractor of an associated contractive local IFS. To this end, we need to require that $\X$ is a closed subset of a complete metric space. Consider the complete metric space $\X\times\Y$ and define mappings $w_i:\X_i\times\Y\to\X\times\Y$ by
\[
w_i (x, y) := (u_i (x), v_i (x,y)), \quad i\in \N_N.
\]
Assume that the mappings $v_i: \X_i\times \Y\to \Y$ in addition to being uniformly contractive in the second variable are also uniformly Lipschitz continuous in the first variable, i.e., that there exists a constant $L > 0$ so that for all $y\in \Y$,
\[
d_\Y(v_i(x_1, y),v_i(x_2, y)) \leq L \, d_\X (x_1,x_2), \quad\forall x_1, x_2\in \X_i,\quad\forall i\in \N_N.
\]
Denote by $a:= \max\{a_i\st i\in \N_N\}$ the largest of the Lipschitz constants of the mappings $u_i:\X_i\to \X$ and let $\theta := \frac{1-a}{2L}$. The mapping $d_\theta : (\X\times\Y)\times(\X\times\Y) \to \R$ defined by
\[
d_\theta := d_\X + \theta\,d_\Y
\]
is then a metric for $\X\times\Y$ which is compatible with the product topology on $\X\times\Y$.

\begin{theorem}
The family $\cW_\loc := \{\X\times\Y; (\X_i\times\Y, w_i)\st i\in \N_N\}$ is a contractive local IFS in the metric $d_\theta$ and the graph $G(f^*)$ of the local fractal function $f^*$ associated with the operator $\Phi$ given by \eqref{Phi} is an attractor of $\cW_\loc$. Moreover, 
\be\label{GW}
G(\Phi f^*) = \cW_\loc (G(f^*)),
\ee
where $\cW_\loc$ denotes the set-valued operator \eqref{hutchop} associated with the local IFS $\cW_\loc$.
\end{theorem}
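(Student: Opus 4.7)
The proof splits naturally into three tasks, which I would handle in order: first, verify that each $w_i$ is a strict contraction on $(\X_i \times \Y, d_\theta)$; second, establish the graph identity $G(\Phi f^*) = \cW_\loc(G(f^*))$ by a direct substitution; third, deduce that $G(f^*)$ is a local attractor of $\cW_\loc$ by applying the fixed-point equation $\Phi f^* = f^*$.

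For the contractivity, my plan is to bound each coordinate separately and then glue them using $d_\theta = d_\X + \theta\, d_\Y$. The first coordinate gives $d_\X(u_i(x_1), u_i(x_2)) \le a\, d_\X(x_1, x_2)$ from the Lipschitz constant $a = \max_i a_i$. For the second, a triangle inequality through the intermediate point $v_i(x_1, y_2)$, combined with the uniform contractivity of $v_i$ in its second slot (ratio $\ell$) and the uniform Lipschitz hypothesis in its first slot (constant $L$), yields $d_\Y(v_i(x_1, y_1), v_i(x_2, y_2)) \le \ell\, d_\Y(y_1, y_2) + L\, d_\X(x_1, x_2)$. Combining,
\[
d_\theta(w_i(x_1,y_1), w_i(x_2, y_2)) \le (a + \theta L)\, d_\X(x_1, x_2) + \ell\cdot \theta\, d_\Y(y_1, y_2).
\]
With the choice $\theta = (1-a)/(2L)$, the factor $a + \theta L$ collapses to $(1+a)/2$, so the right-hand side is dominated by $\max\{(1+a)/2,\, \ell\} \cdot d_\theta$, yielding a uniform contraction constant strictly less than $1$, assuming $a < 1$ (the standing contractivity of the $u_i$).

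For the graph identity, I would intersect and then apply $w_i$: since $G(f^*) \cap (\X_i \times \Y) = \{(x, f^*_i(x)) : x \in \X_i\}$, one has $w_i\bigl(G(f^*) \cap (\X_i \times \Y)\bigr) = \{(u_i(x), v_i(x, f^*_i(x))) : x \in \X_i\}$. Reparametrising by $z = u_i(x) \in u_i(\X_i)$, this set is precisely the portion of $G(\Phi f^*)$ sitting over $u_i(\X_i)$ according to the definition \eqref{RB} of the RB operator. Taking the union over $i \in \N_N$ and invoking property $\mathrm{(P)}$, namely that $\{u_i(\X_i)\}_{i \in \N_N}$ partitions $\X$, assembles the pieces into all of $G(\Phi f^*)$, which proves \eqref{GW}. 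Since $f^* = \Phi f^*$ by the fixed-point equation, this immediately gives $G(f^*) = \cW_\loc(G(f^*))$, so $G(f^*)$ is a local attractor in the sense of \eqref{attr}. The one delicate bookkeeping step is the balancing in the contractivity calculation: the cross-term $L\, d_\X(x_1, x_2)$ arising from the bi-Lipschitz decomposition of $v_i$ must be absorbed back into the first coordinate, and it is precisely the weight $\theta = (1-a)/(2L)$ that keeps the combined coefficient of $d_\X$ strictly below $1$ while leaving the $d_\Y$-coefficient equal to $\ell < 1$.
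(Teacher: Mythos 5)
Your proposal is correct and takes essentially the same route as the paper: the same splitting of $d_\theta = d_\X + \theta\, d_\Y$ with a triangle inequality in the second coordinate to obtain the contraction constant $\max\{a+\theta L,\ \ell\}$ (the paper leaves this as $q:=\max\{a+\theta L, s\}<1$ where you explicitly simplify $a+\theta L=(1+a)/2$), and the same direct computation of $w_i\bigl(G(f^*)\cap(\X_i\times\Y)\bigr)$ combined with the fixed-point equation $f^*\circ u_i = v_i(\cdot,f^*)$ and property $\mathrm{(P)}$. The only cosmetic differences are your choice of intermediate point in the triangle inequality and the order in which you establish the identity $G(\Phi f^*)=\cW_\loc(G(f^*))$ versus the attractor property.
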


\begin{proof}
We first show that $\{\X\times\Y; (\X_i\times\Y, w_i)\st i\in \N_N\}$ is a contractive local IFS. For this purpose, let $(x_1,y_1), (x_2,y_2)\in \X_i\times\Y$, $i\in \N_N$, and note that
\begin{align*}
d_\theta (w_i(x_1,y_1), w_i(x_2,y_2)) & = d_\X (u_i (x_1), u_i(x_2)) + \theta d_\Y(v_i (x_1,y_1), v_i (x_2,y_2)) \\
& \leq a\, d_\X(x_1, x_2) + \theta d_\Y(v_i (x_1,y_1), v_i (x_2,y_1))\\ 
& \qquad + \theta d_\Y(v_i (x_2,y_1), v_i (x_2,y_2))\\
& \leq (a + \theta L) d_\X(x_1, x_2) + \theta\,s \,d_\Y(y_1,y_2) \\
& \leq q\,d_\theta ((x_1,y_1), (x_2,y_2)).
\end{align*}
Here we used \eqref{scon} and set $q:= \max\{a + \theta L, s\} < 1$. 

The graph $G(f^*)$ of $f^*$ is an attractor for the contractive local IFS $\cW_\loc$, for
\begin{align*}
\cW_\loc (G(f^*)) & = \bigcup_{i=1}^N w_i (G(f^*)\cap \X_i) = \bigcup_{i=1}^N w_i (\{(x, f^*(x)\st x\in \X_i\}\\
& = \bigcup_{i=1}^N \{(u_i (x), v_i(x, f^*(x)))\st x\in \X_i\} = \bigcup_{i=1}^N \{(u_i(x), f^*(u_i(x)))\st x\in \X_i\}\\ 
& = \bigcup_{i=1}^N \{(x, f^*(x)) \st x\in u_i(\X_i)\} = G(f^*).
\end{align*}
That \eqref{GW} holds follows from the above computation and the fixed point equation for $f^*$ written in the form
\[
f^*\circ u_i (x) = v_i (x, f^* (x)), \quad x\in \X_i, \quad i\in \N_N.
\]
\end{proof}

\section{Computation and Examples}

\subsection{Computational remarks}

The main step in the computation of a fractal function relates in one way or the other to the 
evaluation of the RB operator. We will discuss a discretisation of the RB operator here. 
Note that this discretisation does not involve any numerical approximations but is an 
exact restriction of the full RB operator and will thus (in exact arithmetic) deliver
values of the full RB operator applied to a function.

For computational and visualation purposes we introduce a grid $\X^g \subset \X$ which is a finite 
subset. The numerical computations will then be done for functions $f^g:\X^g \rightarrow \Y$. We 
introduce a restriction $\Phi^g$ of the RB operator $\Phi$ by
$$ \Phi^g f^g\,(x) = \Phi f\,(x), \quad x \in \X^g, \; f^g=f|_{\X^g}.$$
Due to the occurrence of $f_i(u_i^{-1}(x))$, this defines a mapping 
$\Phi^g : \Y^{\X^g} \rightarrow \Y^{\X^g}$ if the grid has the property that $u_i^{-1}(x)\in \X^g$
whenever $x\in u_i(\X_i)\cap \X^g$ for some $\X_i$. If a grid $\X^g$ satisfies this property, we call it
\textbf{admissible}. We then call $\Phi^g$ the \textbf{discrete RB operator} corresponding to the RB
operator $\Phi$ and the grid $\X^g$.

We will now rewrite the discrete RB operator slightly for the case where $\Y=\R$. Note that 
in this case $f^g$ is an element of the finite dimensional vector space $\R^{\X^g} := \R^{|\X^g|}$.
First, we define the (potentially nonlinear) maps
$$ w_i : \R^{\X^g} \rightarrow \R^{\X_i^g}$$
by
$$ w_i(f^g) (x) = v_i(x, f^g(x)), \quad x\in \X_i^g, $$
where $\X_i^g = \X_i\cap \X^g$. Then, we define a linear operator 
$U_i:\R^{\X_i^g}\rightarrow \R^{u_i(\X_i)\cap\X^g}$ by
$$ [U_i f](x) := f(u_i^{-1}(x)), \quad x\in u_i(\X_i)\cap \X^g. $$
$U_i$ is then a sampling operator and we have in particular
$$ [U_i w_i(f^g)](x) = w_i(f^g)(u_i^{-1}(x)) = v_i(u_i^{-1}(x), f^g(u_i^{-1}(x))), 
   \quad x\in u_i(x)\cap \X^g. $$
As the sets $u_i(\X_i)\cap\X^g$ form a partition of $\X^g$ one then has for the discrete RB
operator
$$ \Phi^g f^g = \bigoplus_{i=1}^N U_i\, w_i(f^g). $$

For the special case where $v_i(x, y) = \lambda_i(x) + S_i(x) y$, one introduces the restriction
operator $E_i: \R^{\X^g} \rightarrow \R^{\X_i^g}$ defined by $E_i f(x) := f(x)$ for $x\in \X_i^g$.
The RB operator then is an affine mapping of the form
$$ \Phi^g f^g = \bigoplus_{i=1}^N U_i \lambda_i + U_i S_i E_i f^g, $$
where $S_i$ is the multiplication operator (diagonal matrix) with elements $S_i(x)$. Thus, one has
$$ \Phi^g f^g = \lambda^g + M f^g $$
where the matrix $M$ is factorised in the following way:
$$  M = USE = \begin{bmatrix}U_1 \\ U_2 \\ \vdots \\ U_N\end{bmatrix}
\begin{bmatrix} S_1 & &  & \\ & S_2 &  & \\  & & \ddots & \\ & & & S_N \end{bmatrix}
\begin{bmatrix} E_1 \\ E_2 \\ \vdots \\ E_N \end{bmatrix}. $$
Both matrices $U_i$ and $E_i$ are sampling matrices, i.e., they contain at most one nonzero element
(with value one) in each column. As the matrices $S_i$ are diagonal, one can further simplify the factorisation
as
$$  M = 
\begin{bmatrix} U_1 S_1 U_1^T & & & \\ & U_2 S_2 U_2^T&  & \\ & & \ddots 
  & \\ & & & U_N S_N U_N^T \end{bmatrix}
\begin{bmatrix} U_1 E_1 \\ U_2 E_2 \\ \vdots \\ U_N E_N \end{bmatrix}. $$
Here the matrices $U_i S_i U_i^T$ are square so that the first factor is a diagonal matrix and
the factors $U_i E_i$ are sampling matrices.

One sees that the discrete RB operator can be applied in parallel. However, a difficulty is still 
that in general the evaluation of the sampling operators $U_i E_i$ may require substantial communication
between the processors. This needs to be analysed for each particular case. In some (practically
important) cases, however, one can reduce the amount of communication. This happens when the $X_i$
are uniquely partitioned by some $u_j(\X_j^g)$ in the sense that there exists a partition
$$ \bigcup_{m=1}^M K_m = \N_N$$
such that
$$ \X_i^g = \bigcup_{j\in K_m} u_j(\X_j) \cap \X^g, \quad i\in K_m. $$
From the factorisation above one can derive that in this case the operator $M$ has a block diagonal
structure with $M$ blocks. Furthermore, each block has a factorisation similar to the one above. 
This leads to highly efficient parallel algorithms which will be discussed elsewhere.
We will refer to this case as having a \textbf{local refinement}. Typically, to each block belongs a standard (global)
IFS so that the local IFS consists of $M$ standard ones. The connection between the various IFSs 
is obtained through the choice of the $\lambda_i$ and $S_i$.

\subsection{Example 1: The one-dimensional case with constant $\lambda_i$ and $S_i$}

For this example let $\X=[0,1)$ and $\Y=\R$. Furthermore, let the number $N$ of functions in the local
IFS be even and let $\X_{2j-1}=\X_{2j}=[(j-1)h,jh)$ for $j=1,\ldots,N/2$ where $h=2/N$. Furthermore, let
$$ u_{2j-1}(x) = \frac{x+(j-1)h}{2}\quad \text{and} \quad u_{2j}(x) = \frac{x+jh}{2}, \quad x\in\X_{2j-1}=\X_{2j}. $$
This choice for the mappings $u_i$ implies that $u_i(\X_i) = \left[(i-1)\frac{h}{2},  i \frac{h}{2}\right)$. 
In this first example we choose $ v_i(x,y) = \lambda_i + S_i\, y$, where $\lambda_i, S_i \in \R$ and $|S_i| < 1$, $i = 1, \ldots, N.$ The discrete grid is chosen to be uniform with $h_g=1/N_g$ and where $N_g$ is a multiple of $N$.

One sees that we have here a block structure as discussed at the end of the previous section with $M=2$. Using vector
notation, one gets with $e=(1,\ldots,1)\in \R^{N_g/N}$ the vector
$$ \lambda = (\lambda_1 e,\ldots,\lambda_N e)^T$$
and the matrix
\begin{eqnarray*} 
M & = &S
\begin{bmatrix} S_1 I & & & & & &  \\ & S_2 I &  & & & &  \\ & & S_3 I  & & & &  \\ 
& & & S_4 I   & & &  \\ & & & & \ddots  & & \\ & &  & & & S_{N-1} I & \\ & & &  & & & S_N I 
\end{bmatrix}
\begin{bmatrix} F & & & & \\ F & & & & \\ & F & & & \\ & F & & & \\ & & \ddots & & \\ & & & & F \\ & & & & F 
\end{bmatrix} \\
& = & 
\begin{bmatrix} S_1 F & & & & \\S_2  F & & & & \\ &S_3 F & & & \\ &S_4 F & & & \\ & & \ddots & & \\ 
    & & & & S_{N-1} F \\ & & & & S_N F 
\end{bmatrix},
\end{eqnarray*}
where $F$ is the sampling matrix selecting every second element in the $\X_i$'s. The fractal function is defined on
each domain and there it obeys the fixed point equation
$$ f^g_j = \begin{bmatrix} \lambda_{2j-1} e^T \\ \lambda_{2j} e^T \end{bmatrix} +
           \begin{bmatrix} S_{2j-1} F \\  S_{2j} F \end{bmatrix} \, f^g_j. $$
From these equations one can see that solving this iteratively using the fixed point iteration gives an error of the
order of $O((S_{2j-1}^2 + S_{2j}^2)^{k/2})$ for $k$ iterations.

We selected the $S_i$'s and the $\lambda_i$'s randomly and iterated with the RB operator. The result is
displayed in Figure~\ref{fig:randfracfun}.
\begin{figure}
  \centerline{\includegraphics[width=0.5\textwidth]{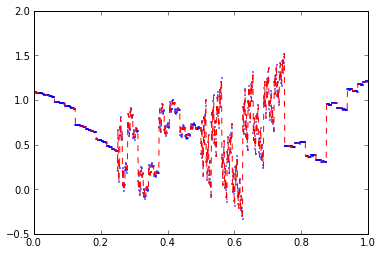}}
\caption{Random 1D fractal function\label{fig:randfracfun}}
\end{figure}
In this case, we chose $N=8$ and thus have four different domains. One can clearly see the different
behaviour on the four domains.

\subsection{Example 2: Interpolating 1D fractal functions}

As before, we choose constant $\lambda_i$ and constant $S_i$. Furthermore, 
assume that the function values at the boundaries of the domains are to be interpolated. From the fixed point
equation one then obtains
$$ \lambda_{2j-1} = (1-S_{2j-1})\, f((j-1)h) $$
and
$$ \lambda_{2j} = (1-S_{2j})\, f(jh). $$
If in addition one would like to have continuity at the midpoint then one needs to require that
$$ (1-S_{2j}-S_{2j-1})\, (f(jh)-f((j-1)h)) = 0. $$
The constants $S_i$ with odd index, $S_{2j-1}$, were chosen randomly and those with even index as
$$S_{2j} = 1-S_{2j-1}.$$ This particular choice implies that the convergence rate is at not any faster than $\sqrt{1/2}$.

If one selects $S_i = 0.5$ for all $i$, a piecewise linear interpolant is obtained. In Figure~\ref{fig:interfracfun}
we have displayed a couple of interpolants for $(x(1-x))^{0.2}$. This shows that some of the interpolants have similar
errors as the piecewise linear interpolant. However, it also shows that at the boundaries some of the interpolants
perform substantially better than the piecewise linear interpolant.
\begin{figure}
  \centerline{\includegraphics[width=0.5\textwidth]{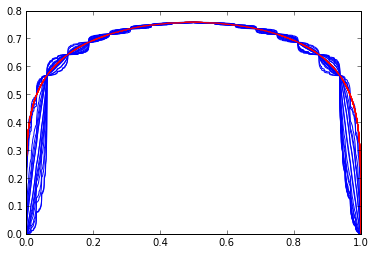}}
\caption{Random 1D interpolating fractal functions \label{fig:interfracfun} for $(x(1-x))^{0.2}$. }
\end{figure}

The evaluation of the RB operator for the interpolation problem converges with the same rate as if one begins 
the iteration at a random point. If, however, one starts the iteration at zero one obtains finite termination
for a finite grid. The number of iterations is of order $O(\log_2(N_g))$ where $N_g$ is the number of
numerical grid points.

\subsection{Example 3: Variable $\lambda_i$ and constant $S_i$}

The main issue here is how to choose the functions $\lambda_i$. From the fixed point
equation $\Phi f = f$ one gets
$$ \lambda_i(x) = f(u_i(x)) - S_i f(x), \quad x \in \X_i.$$
This shows that for any function $f$ and $S_i$ there exists a $\lambda_i$. (See also Theorem \ref{thm3.3}.) 
But this $\lambda_i$ is as complex as the original function and thus there is no gain in
representing $f$ by $\lambda_i$. In some cases, howevever, $f$ cannot be simplified.
In this case one might choose $S_i=0$ and thus 
$$\lambda_i(x) = f(u_i(x)).$$ 

A simple choice for the $\lambda_i$ is: $\lambda_i(x) = \alpha_i + \beta_i x$, for some constants 
$\alpha_i$ and $\beta_i$. As in the case of constants this simple model can also 
lead to rather complicated functions. (See Figure~\ref{fig:ranlinlamconstS}.)
Again one observes a different behaviour on the four different ranges of the $u_i$.
Note, however, that the fractal function $f$ is a linear function of the
$\lambda_i$ so that the dimension of the affine space generated by some $\lambda_i$
has at most as many dimensions as the linear space defined by the vector $\lambda$. 
(In this context, see Theorem \ref{thm3.3} and 
the results in \cite{M97} and \cite{mas2} where this dimension is 
explicitly computed.)
\begin{figure}
  \centerline{\includegraphics[width=0.5\textwidth]{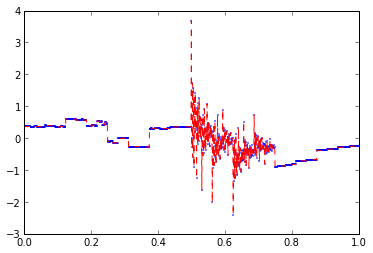}}
\caption{Random fractal function \label{fig:ranlinlamconstS} with affine $\lambda_i$}
\end{figure}

One can also determine $\lambda_i$ such that the resulting fractal function is
interpolatory. If one chooses all $S_i=0.25$, one can select the $\lambda_i$ such
that the resulting fractal function is differentiable at the boundary points between the
domains of the $u_i$ and is therefore a Hermite interpolant at these points. (Cf. also \cite{M97,mas2}.) 
In general, this
function does have discontinuities, in particular, at the midpoints of the domains.
Experiments suggest that the approximation order of this (discontinuous)
interpolant is of third order in the size of the domains. This is the same order
as one would expect from a piecewise quadratic function. An example of the error
curve for the function $\exp(4x)$ can be seen in Figure~\ref{fig:Hermite}. One
can clearly observe that the error is differentiable at the grid points but has some
large discontinuities within the domains $\X_i$.
\begin{figure}
  \centerline{\includegraphics[width=0.5\textwidth]{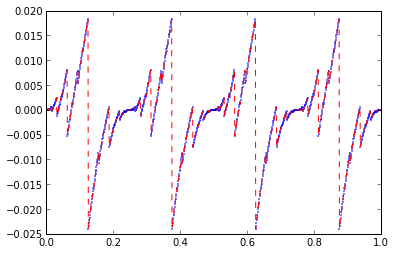}}
\caption{Error of Hermite interpolant of $exp(4x)$ using fractal function 
        \label{fig:Hermite}}
\end{figure}

While linear spaces of $\lambda$ define linear function spaces of fractal 
functions, not every linear function space for $f$ consists of fractal functions.
For this to be the case the function space itself must be self-referential. We define
a linear function space spanned by finitely many functions $\psi_1(x),\ldots,\psi_d(x)$, $d\in \N$, to be 
\textbf{self-referential} if there exist matrices $A_i$ and vectors $b_i$ such that
$$ \psi\circ u_i = b_i + A_i \psi, $$
where $\psi: \X \rightarrow \R^d$ is defined by
$$\psi(x) = (\psi_1(x),\ldots, \psi_d(x))^T.$$
In this case there exists $\lambda_i(x) = \sum_{j=1}^d c_{i,j} \psi_j(x)$ such
that the fractal function defined by the $\lambda_i$ is an element of the function
space.
Prominent examples of such function spaces include polynomials and scaling
functions. More generally, the condition of self-referantiabilty for bases is found
in the subdivision schemes of computer graphics.

\section{Polynomial Fractals}

\subsection{The Taylor series}

In the following we will investigate the fractal nature of the graph of
polynomials 
\[
  p : [0,1] \rightarrow \R.
\] 
This research is done with a view to the development of efficient
numerical algorithms. In the future we will consider complex-valued polynomials and also real-analytic analytic
functions.

For our purposes, we denote by $\ell_0$ the space of all real-valued sequences having only finitely many terms not equal to zero. As is common practice, we endow $\ell_0$ with the ``norm'' $\|a\|_0 := |a_0|^0 + \cdots |a_n|^0$, $a\in \ell_0$. Here, we defined $0^0 := 0$. Furthermore, we denote by $\ell_p$, $p >0$, the space of all real-valued sequences $\R^\N\ni x:=\{x_n\st n\in \N_0\}$ such that
\[
\sum_{n=0}^\infty |x_n|^p < \infty.
\]
Note that for $p \geq 1$, 
\[
\|x\|_{\ell^p} := \left(\sum_{n=0}^\infty |x_n|^p\right)^{1/p}
\]
defines a norm making $\ell_p$ into a Banach space. For $0 < p <1$, the function
\[
d_p (x,y) := \sum_{n=0}^\infty |x_n - y_n|^p
\]
defines a metric making $\ell_p$ into a complete topological vector space which is not normable. In this setting, $\ell_0$ may be thought of as $\ell_p$ where $p\to 0+$.

Let $a\in \ell_0$ be a finite sequence and define a function
$v:[0,1] \rightarrow \ell_1$ with components 
\[
   v_k(x) = \frac{x^k}{k!}.
\] Then the function $p$ given by 
\[
  p(x) = \sum_{k=0}^\infty a_k v_k(x)
\] 
is a polynomial and $a_k$ is the value of its $k$-th derivative at
zero. One can see that any derivative of $p$ satisfies 
\[
  \frac{d^k p(x)}{dx^k} = \sum_{j=0}^\infty a_{k+j} v_j(x).
\] 
This motivates the introduction of a function
$f: [0,1] \rightarrow \ell_0$ with components
\[
f_k(x) = \frac{d^k p(x)}{dx^k}.
\] 
As $p$ is a polynomial only a finite
number of components $f_k$ are not equal to zero. One can now reformulate
the Taylor series of $p$ at any point $x$ as
\[
  p(x+t) = f(x)^T v(t).
\] 
Similar formulas for all the derivatives of $p$ may be obtained in a similar fashion.
This can all be stated using the matrices $A(x)$ and $V(t)$ defined by
\[
  [A(x)]_{ij} = [f(x)]_{i+j}
\] 
and 
\[
  [V(t)]_{ij} = \begin{cases} [v(t)]_{i-j}, \quad&  \text{if}\; i\geq j \\
                0 & \text{else}, \end{cases}
\] 
respectively. In the following choose indices $i,j=0,1,\ldots$ to always start at
zero. Note that $A(x)$ is a Hankel matrix and $V(t)$ a Toeplitz
matrix. From the above one can show that the Taylor series for all the
derivatives takes the form 
\[
   f(x+t) = A(x) v(t) = V(t) f(x).
\]

The infinite matrix $A(x)$ is a Hankel matrix and the anti-diagonals
take the values $f_i(x)$. As $f(x)$ only has a finite number of
elements (corresponding to $p$ being a polynomial) there exists some nonnegative
integer $M$ so that 
\[
   f_M(x) \neq 0 \quad \text{and} \quad f_k(x) = 0 \; \text{for $k>M$}.
\] 
Now let $A_n(x) \in \R^{n\times n}$ denote the principle submatrix of
$A(x)$. It follows that $A_{M+1}(x)$ is invertible left upper
triangular with antidiagonal elements $a_M$. Consequently, the
generalised inverse $A(x)^+$ of $A(x)$ is a matrix which has zero
elements except for a principle $(M+1)\times (M+1)$ block $(A(x)^+)_M$
which is 
\[
   (A(x)^+)_M = A_M(x)^{-1}.
\]

\begin{lemma}
Let $A$ be an infinite upper triangular Hankel matrix with
$A_{M,0} \neq 0$ and $A_{k,0} = 0$ for all $k>M$. Let $[B]_M$ denote the
principal $(M+1) \times (M+1)$ block of any matrix $B$ and let $A^+$
denote the Moore-Penrose inverse of $A$. Then $[A]_M$ is nonsingular and
\[ 
[A^+]_M = [A]_M^{-1} 
\] 
is a lower triangular Hankel matrix. Moreover,
\[ 
A^+ = \sum_{k=0}^{\infty} (I-\frac{1}{a_M}P_M A)^k   \frac{1}{a_M}P_M A.  
\]
\end{lemma}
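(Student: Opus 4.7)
The plan is to attack the three assertions in sequence, with the unifying tool being the $(M{+}1) \times (M{+}1)$ exchange matrix $J$ given by $J_{ij} = \delta_{i+j,M}$, which turns Hankel structure into Toeplitz structure.

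First, to show $[A]_M$ is invertible, I would compute $(J[A]_M)_{ij} = A_{M-i,j} = h_{M-i+j}$, where $h_\ell := A_{\ell,0}$ denotes the Hankel symbol. The hypothesis $h_\ell = 0$ for $\ell > M$ forces $J[A]_M$ to be lower triangular with constant diagonal $h_M = a_M \neq 0$, so it is invertible and therefore so is $[A]_M$. For the Hankel structure of $[A]_M^{-1}$, I would write $[A]_M^{-1} = (J[A]_M)^{-1} J$; the inverse of a lower triangular Toeplitz matrix is again lower triangular Toeplitz (with first diagonal entry $1/a_M$), and right-multiplying by $J$ converts entries depending on $i-j$ into entries depending on $i+j$, supported on $i+j \geq M$. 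That is precisely a lower triangular Hankel matrix.

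To identify $[A^+]_M$ with $[A]_M^{-1}$, I would pass to the block decomposition $\ell_2(\N_0) = R \oplus R^\perp$ with $R := \mathrm{span}\{e_0, \ldots, e_M\}$. The support structure of $A$ gives the block form $A = \mathrm{diag}([A]_M, 0)$, so $\mathrm{ran}(A) = R$ and $\ker(A) = R^\perp$ are already orthogonal complements. The four Moore--Penrose conditions then uniquely determine $A^+ = \mathrm{diag}([A]_M^{-1}, 0)$, from which $[A^+]_M = [A]_M^{-1}$ is immediate.

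Finally, for the series formula I would set $B := \frac{1}{a_M} P_M A$ and use the same block decomposition to reduce the partial sums, via telescoping, to $I - (I - B)^{K+1}$; on $R^\perp$ this vanishes, and on $R$ it involves the Richardson-type iterate $(I_R - \frac{1}{a_M}[A]_M)^{K+1}$. I expect this last step to be the main obstacle: the literal Richardson iteration need not have spectral radius below one (already for $M=1$ the eigenvalues of $\frac{1}{a_M}[A]_M$ have product $-1$), so one must re-express the series through the nilpotent structure uncovered in step one. Concretely, $\frac{1}{a_M} J[A]_M = I + L'$ with $L'$ strictly lower triangular and therefore nilpotent of order at most $M+1$, so the Neumann expansion of $(I + L')^{-1}$ terminates in finitely many terms; translating back through $J$ should then yield the stated formula.
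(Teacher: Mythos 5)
Your treatment of the first two claims is correct and is essentially the paper's own argument: left multiplication by the reversal permutation (your $J$ is the paper's $P_M$ restricted to the principal block) turns $[A]_M$ into a lower triangular Toeplitz matrix with constant diagonal $a_M\neq 0$, whence invertibility; and since every nonzero entry of $A$ lies in the principal block, $A=\mathrm{diag}([A]_M,0)$ and the four Moore--Penrose conditions are checked directly for $\mathrm{diag}([A]_M^{-1},0)$. You are in fact more explicit than the paper on two points it leaves implicit --- the Hankel structure of $[A]_M^{-1}$ via $[A]_M^{-1}=(J[A]_M)^{-1}J$, and the role of the orthogonality of $\mathrm{ran}\,A$ and $\ker A$ in making the Moore--Penrose verification immediate --- so up to here the proposal is sound.

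The final step, however, contains an unresolved inconsistency that you half-notice but do not resolve. Your telescoping identity is correct: with $B=\frac{1}{a_M}P_MA$ one has $\sum_{k=0}^{K}(I-B)^kB=I-(I-B)^{K+1}$, and since $(I-B)\vert_{R}=-L'$ is nilpotent while $(I-B)\vert_{R^{\perp}}=I$ and $B\vert_{R^{\perp}}=0$, the series converges to $\mathrm{diag}(I_R,0)$, i.e.\ to the orthogonal projector $AA^{+}$ onto $\mathrm{ran}\,A$ --- \emph{not} to $A^{+}=\mathrm{diag}([A]_M^{-1},0)$, with which it agrees only when $[A]_M=[I]_M$ (already for $M=0$ the series sums to $\mathrm{diag}(1,0,\dots)$ while $A^{+}=\mathrm{diag}(1/a_0,0,\dots)$). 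Your alternative route via the terminating Neumann expansion $(I+L')^{-1}=\sum_k(-L')^k$ produces, after ``translating back through $J$,'' the \emph{different} expression $\sum_k(I-B)^k\cdot\frac{1}{a_M}P_M$ (no trailing $A$), which does equal $[A]_M^{-1}$ on the principal block but diverges off it. So the two computations you sketch yield two different objects, neither of which is literally the displayed series equalling $A^{+}$, and the closing assertion that the translation ``should then yield the stated formula'' papers over this. The honest conclusion is that the series as printed evaluates to $AA^{+}$, and the lemma's last display needs its trailing factor $\frac{1}{a_M}P_MA$ replaced by $\frac{1}{a_M}P_M$ (interpreted on the principal block); the paper's own one-line justification (``the geometric series for $I+L$ \dots\ leads to the stated formula'') glosses over exactly the same point, but a complete proof must either correct the formula or stop short of it.
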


\begin{proof}
Let $P_M$ be the permutation with $(P_M x)_j = x_{M-j}$ for
$j=0,\ldots M$ and $(P_M x)_j = x_j$ for $j> M$. Then $[P_M A]_M$ is a
lower triangular Toeplitz matrix with nonzero diagonal. Consequently it
and also $[A]_M$ are invertible. If $B$ is the matrix which is zero
except for the principle submatrix $[B]_M$ and such that
$[B]_M = [A]_M^{-1}$ then one can show that $AB$ is an infinite matrix
with $[AB]_M = [I]_M$ and zero elsewhere. (Here, $I$ denotes the identity matrix.)
From this one easily confirms the four defining criteria of a
Moore-Penrose Inverse.

As $P_M A$ is regular lower triangular Toeplitz with diagonal elements
$a_M$ the matrix $L = I-\frac{1}{a_M} P_M A$ is also lower triangular
Toeplitz with zero diagonal. It follows that $L^M = 0$ and thus one can
obtain the inverse of $[A]_M$ using the geometric series for $I+L$. This 
leads to the stated formula.
\end{proof}

One can get an explicit formula for the inverse. For simplicity we omit the $x$. As
$M$ is the degree of the polynomial and $P_M$ is the reversal
permutation of the first $M+1$ elements, the geometric series converges
as any term with $k\geq M$ is zero.

The matrix $V(t)$ also has a nice structure and one can see that 
\[
   V(t) = \exp(t \sigma)
\] 
where $\sigma$ is the forward shift matrix given by
$[\sigma]_{i,j}=\delta_{i,j-1}$. Consequently one obtains 
\[
  V(t)^{-1} = V(-t) = \exp(-t\sigma)
\] 
and thus
\[
  f(x) = V(-t) f(x+t).
\] 
Hence, the operators $V(t)$ form a group with 
\[
  V(t) V(s) = V(t+s), \quad V(0) = I, \quad V(t)^{-1} = V(-t).
\] 
In addition, we also obtain that $V(s)^T v(t) = v(t+s)$.
\subsection{Self-referentiality of $v$ and $f$}
So far we have considered the properties of $v$ under translations. We
will now consider dilations. The dilations in the $x$-space are defined
by mappings $l_x$ of the form.
\[
  l_x(t) = (1-s)x + st.
\] 
By definition one has 
\[
  v(sx) = D_s v(x)
\] 
where $D_s = \operatorname{diag}(s^k)_{k=0,\ldots,\infty}$. Then the
map on $\R\times \ell_1 \rightarrow \R \times \ell_1$ given by 
\[
 w(x,y) := (sx, D_s y)
\] 
satisfies $w(x,v(x)) = (sx, D_s v(x)) = (sx, v(sx))$ and thus $w$
leaves the graph $\left\{(x,v(x))\right\}$ invariant. More generally,
one has

\begin{align*}
  v(l_x(t)) & = v(x+s(t-x))  \\
            & = V(x)^T v(s(t-x)) \\
            & = V(x)^T D_s v(t-x) \\
            & = V(x)^T D_s V(-x)^T v(t)
\end{align*}

Then the mapping $w$ defined by 
\[
  w(t,y) := (l_x(t), V(x)^T D_s V(-x)^T y)
\] 
satisfies 
\[
  w(t, v(t)) = (l_x(t), v(l_x(t)))
\] 
and consequently $w$ leaves the graph of $v$ invariant. A similar
observation has also been reported in a forthcoming publication Barnsley et al.
\cite{BHM}.

Next, we like to find functions $w$ under which the graph of $f$ is invariant. To this end, 
consider 
\[
  w(t,y) = (l_x(t), A(x) D_s A(x)^+ y).
\] 
Recall from above that $A(x) v(t) = f(x+t)$. Therefore, one
concludes that 
\[
  [A(x)^+ f(t)]_k = \begin{cases} v_k(t-x), \quad \text{for $k\leq M$}\\
                                  0, \quad \text{for $k > M$}. \end{cases}
\] 
An argument similar to the one given in the previous example yields
\[
  w(t,f(t)) = (l_x(t), f(l_x(t)),
\] 
implying that $w$ leaves the graph of $f$ invariant. According to \cite{BHM} we call
the mappings $w$ which leave a polynomial invariant \textbf{fractels}. For more details and fundamental 
properties of fractels, we refer the reader to the upcoming publication \cite{BHM}.
\subsection{Affine IFSs for given polynomials}
Here we combine two fractels $w$ from the previous section to form an
IFS. The infinite matrix 
\[
   W_s(x) = A(x) D_s A(x)^+
\] 
has the following properties:

\begin{itemize}
\itemsep1pt\parskip0pt\parsep0pt
\item
  $W_s(x)$ is of rank $M+1$;
\item
  Most eigenvalues of $W_s(x)$ are thus equal to zero. The nonzero eigenvalues are
  $1,s,s^2,\ldots, s^M$;
\item
  $W_s(x)$ is lower triangular (and the eigenvalues are on the diagonal).
\end{itemize}

\noindent
It is possible to use the fractels introduced in the last section but due to the occurrence of
the eigenvalue 1, the fixed point of the resulting IFS is not unique and
typically depends on the starting point. Note that if linear mappings
are used with all eigenvalues less than zero the only fixed point is the zero
function. Thus in this case one needs eigenvalues of value 1. Such an approach
may be suited for the case of projective spaces, here however we consider affine spaces. Therefore, 
we replace the linear $w$ from last section by 
\[
  w(t, y) := (l_x(t), (W_s(x) - \theta e_0 e_0^T) y + \theta f_0(x) e_0)
\] 
for some $\theta \in [0,1]$. In practice the choice $\theta=0.5$ was
very stable but sometimes lead to slow convergence. Choosing $\theta=0$
was faster but less stable. This behaviour will be investigated further and the results reported elsewhere.

The particular choice of affine function also leaves the graph of $f$ invariant as
\[
  (W_s(x) - \theta e_0 e_0^T) f(x) + \theta f_0(x) e_0 = W_s(x) f(x).
\] 
Note that we used $e_0 := (1,0,\ldots)$.

For illustrative purposes, let us consider $x\in[0,1]$ and define an IFS $\{[0,1]; w_1, w_2\}$
consisting of two functions $w_1$ and $w_2$ which correspond to the Taylor expansion at
$x=0$ and at $x=1$. Furthermore, we let us choose $s=0.5$. Then
\[
  w_0(x,y) = (0.5 x, (W_{0.5}(0)-\theta\, e_0 e_0^T)y + \theta a_0 e_0)
\] 
and 
\[
  w_1(x,y) = (0.5(x+1), (W_{0.5}(1)-\theta\, e_0 e_0^T)y + \theta b_0 e_0)
\] 
where we have $a_0 = f_0(0)$ and $b_0=f_0(1)$.

\section{Algorithms}

In this section we present some algorithmic aspects which are mostly motivated
by the Collage Theorem. We first consider convex optimisation, then grids and
finally subdivision. Here we only provide a rough outline. A more detailed
treatment is under development.

\subsection{Collage fitting}

In this section a new kind of approximant for the solution of elliptic
problems is introduced. We call this approximant \emph{collage fit}. Like the 
common Ritz method this approximation is shown to be quasi-optimal.  
Let in the following $H$ be a Hilbert space and $a(\cdot,\cdot)$ be a symmetric $H$-elliptic
form. We consider here the problem of determining 
$$
   \hat{u} = \argmin_{u\in V} \Psi(u).
$$
where $\Psi(u) = \frac{1}{2} a(u,u) - b(u)$ and $b$ is a continuous linear functional on $H$.
Let $V_N\subset H$ be an $M$ dimensional linear subspace of $H$. 
The widely used \emph{Ritz method} provides an approximation $\hat{u}_N\in V_N$ to $\hat{u}$ defined by
$$  \hat{u}_N := \argmin_{u\in V_N} \Psi(u). $$
It can be shown that the Ritz method minimises the energy norm of the error $\hat{u}_N-\hat{u}$, i.e.,
$$ 
  \|\hat{u}_N - \hat{u}\|_E \leq \|u_N - \hat{u}\|_E, \quad \text{for all $u_N\in V_N$}
$$
where $\|v\|_E = \sqrt{a(v,v)}$. A consequence of the H-ellipticity is that the energy 
norm is equivalent to the $H$-norm, i.e., there exist $c_1, c_2 > 0$ such that
\begin{equation}\label{eq:BDequivalence}
  c_1 \|v\| \leq \|v\|_E \leq c_2 \|v\|, \quad \text{for all $v\in H$.}
\end{equation}
It follows directly that the Ritz approximation is quasi-optimal, and in particular
$$ \|\hat{u}_N - \hat{u}\| \leq \frac{c_2}{c_1}\, \|u_N - \hat{u}\|, \quad \text{for all $u_N\in V_N$}. $$

We define $V_N$ as a set of fractal functions as follows. 
Let $F(\cdot;\alpha) : H\rightarrow H$ denote a family of RB operators (as 
defined in a previous section) parameterised by a parameter vector $\alpha\in\R^M$.
We will assume that the RB operators are contractive, i.e., that 
$$ \|F(u;\alpha) - F(v;\alpha)\| \leq c \|u-v\|, \quad \text{for all $u,v\in H$} $$
for some constant $c\in (0,1)$. We will also assume a stronger condition, namely
that 
$$ \gamma := \frac{c\, c_2}{c_1} < 1.$$
Finally, we will assume that 
$F(u;\alpha)$ is a linear function of $(u,\alpha) \in V\times \R^M$. 
These assumptions hold for commonly used RB operators. The fixpoint $u_\alpha$ of an RB operator 
$F(\cdot;\alpha)$ is a fractal function. As approximation set for our elliptic problem we consider
$$V_N = \{u_\alpha \mid \alpha\in\R^M, u_\alpha = F(u_\alpha;\alpha)\}.$$
As $F$ is linear in $(u,\alpha)$ the set $V_N$ is a finite-dimensional linear space, and, in addition
that $F$ can be decomposed as
$$ F(u;\alpha) = F(u;0) + F(0;\alpha).$$

It follows that $F(u;\alpha)-F(v;\alpha)=F(u;0)-F(v;0)$ and thus all the $F(\cdot;\alpha)$
are contractive with a constant $c$ independent of $\alpha$.

For the following let $W:= \{F(0;\alpha) \mid \alpha \in \R^M\}$. Note that $W$ is a linear space and
define the affine space
$$ W(u) := F(u;0) + W.$$
We now introduce the operator $G:H\rightarrow H$ by 
$$ G(u) := \argmin_{v\in W(u)} \Psi(v) $$
where $\Psi(v)$ is the quadratic form defined previously.

\begin{samepage}
\begin{proposition}
\label{prop:61}
\begin{itemize}
\item Let $\Psi$ be an $H$-elliptic quadratic form which defines an energy norm $\|\cdot\|_E$ for which there exist
$c_1,c_2>0$ such that $c_1\|v\|\leq\|v\|_E\leq c_2\|v\|$ for all $v\in H$.
\item Let $F(u;\alpha)=F(u;0)+F(0;\alpha)$ define an RB operator which is contractive with constant $c$
such that $c < c_1/c_2$. 
\item Let $G(u) = \argmin_{w\in W(u)} \Psi(w)$.
\end{itemize}
Then the so defined operator $G$ is contractive and 
$$ \|G(u)-G(v)\| \leq \gamma \|u-v\|$$
where $\gamma = c c_2/c_1$.
\end{proposition}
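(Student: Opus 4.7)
The plan is to identify $G(u)-G(v)$ as an $a$-orthogonal projection of $F(u;0)-F(v;0)$, and then use the norm equivalence together with the contractivity of $F$ to get the constant $\gamma$. The core reason this works is that minimizing the quadratic $\Psi$ over an affine space $W(u) = F(u;0) + W$ is equivalent (after a constant shift) to minimizing $\tfrac{1}{2}\|v-\hat{u}\|_E^2$ over $W(u)$, which is just orthogonal projection in the energy inner product.

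First I would unpack the variational characterization of $G(u)$. Writing any $v \in W(u)$ as $v = F(u;0) + z$ with $z \in W$, the first-order optimality condition for the minimizer $G(u)$ becomes
\begin{equation*}
a(G(u), w) = b(w), \qquad \forall\, w \in W .
\end{equation*}
In particular, $G(u) - F(u;0) \in W$. Subtracting the analogous equation for $G(v)$ yields
\begin{equation*}
a(G(u) - G(v), w) = 0, \qquad \forall\, w \in W ,
\end{equation*}
so $G(u)-G(v)$ is $a$-orthogonal to $W$. At the same time,
\begin{equation*}
(G(u) - G(v)) - (F(u;0) - F(v;0)) \in W ,
\end{equation*}
since each difference $G(\cdot) - F(\cdot;0)$ lies in $W$. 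These two facts together say precisely that $G(u)-G(v)$ is the $a$-orthogonal projection of $F(u;0)-F(v;0)$ onto the $a$-orthogonal complement of $W$ inside $H$.

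Since orthogonal projection in the energy inner product is non-expansive in the energy norm,
\begin{equation*}
\|G(u) - G(v)\|_E \;\leq\; \|F(u;0) - F(v;0)\|_E .
\end{equation*}
Now I would invoke the norm equivalence \eqref{eq:BDequivalence}: $c_1\|\cdot\| \leq \|\cdot\|_E \leq c_2\|\cdot\|$. This gives
\begin{equation*}
c_1 \|G(u) - G(v)\| \;\leq\; \|G(u) - G(v)\|_E \;\leq\; \|F(u;0) - F(v;0)\|_E \;\leq\; c_2 \|F(u;0) - F(v;0)\| .
\end{equation*}
Finally, the linear decomposition $F(u;\alpha) = F(u;0) + F(0;\alpha)$ combined with the contractivity of $F$ gives $\|F(u;0) - F(v;0)\| = \|F(u;\alpha) - F(v;\alpha)\| \leq c \|u-v\|$, so dividing by $c_1$ yields $\|G(u)-G(v)\| \leq (c\,c_2/c_1)\|u-v\| = \gamma\|u-v\|$, and the assumption $c < c_1/c_2$ ensures $\gamma < 1$.

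The only step that needs a bit of care is the variational/projection identification; once one sees that $G$ is a "perturbed projection" whose input is the contractive map $F(\cdot;0)$, the rest is just chaining the norm-equivalence constants. A subtle point to confirm is that $F(u;0) \in H$ and that $W$ is a closed subspace of $H$ so that the minimization is well posed and the projection exists uniquely; this follows from $F$ being linear into $H$ and $W$ being finite-dimensional (hence closed), so no additional hypothesis is needed beyond what the proposition assumes.
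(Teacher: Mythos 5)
Your proof is correct and follows essentially the same route as the paper's: establish that $G(u)-G(v)$ is $a$-orthogonal to $W$ while differing from $F(u;0)-F(v;0)$ by an element of $W$, deduce $\|G(u)-G(v)\|_E\leq\|F(u;0)-F(v;0)\|_E$ (you phrase this as non-expansiveness of the energy projection, the paper as the distance between the affine spaces $W(u)$ and $W(v)$ being a minimum over all pairs of points), and then chain the norm equivalence with the contractivity of $F(\cdot;0)$. Your explicit derivation of the orthogonality from the first-order optimality condition merely fills in a step the paper leaves as ``one can show.''
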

\end{samepage}

\begin{proof}
As $G(u)$ is the best approximation in $W(u)$ to $\hat{u}$ one can show that $\hat{u}-G(u)$ is
orthogonal to the space $W$ with respect to the scalar product $a(\cdot,\cdot)$ and the same
holds for $\hat{u}-G(v)$. Thus $G(u)-G(v)$ is orthogonal to $W$ in the same scalar product. It
follows that $\|G(u)-G(v)\|_E$ is the distance between $W(u)$ and $W(v)$ in the energy norm. As
this distance is the minimum distance between any point of $W(u)$ and any point of $W(v)$ one has
in particular
\begin{align*}
  c_1 \|G(u)-G(v)\| &\leq \|G(u)-G(v)\|_E \\
                    & \leq \|F(u;0)-F(v;0)\|_E \\
                    & \leq c_2 \|F(u;0)-F(v;0)\| \\
                    & \leq c_2 c \|u-v\|
\end{align*}
and thus $\|G(u)-G(v)\| \leq \gamma \|u-v\|$. 
\end{proof}

One then has: 
\begin{corollary}[Existence of collage fit $\tilde{u}_N$]
\label{cor:62}
Let $G$ be as in Proposition~\ref{prop:61}. Then there exists a unique $\tilde{u}_N\in V_N$ such
that $\tilde{u}_N=G(\tilde{u}_N)$.
\end{corollary}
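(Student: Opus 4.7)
The plan is to derive existence and uniqueness from the Banach Fixed Point Theorem applied to $G$, and then to verify that the resulting fixed point automatically lies in $V_N$ by exploiting the linear decomposition $F(u;\alpha)=F(u;0)+F(0;\alpha)$.

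First I would invoke Proposition~\ref{prop:61}: the operator $G$ is a contraction on the Hilbert space $H$ with constant $\gamma<1$. Since $H$ is complete, the Banach Fixed Point Theorem gives a unique $\tilde u_N\in H$ with $G(\tilde u_N)=\tilde u_N$. This disposes of existence and uniqueness \emph{in $H$}; the only remaining question is membership in $V_N$.

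Next I would unwind the definition of $G$ to show that every value of $G$ is already the fixed point of some $F(\cdot;\alpha)$. By construction, $G(u)\in W(u)=F(u;0)+W$, so there exists $\alpha(u)\in\R^M$ with $G(u)=F(u;0)+F(0;\alpha(u))$. Using the hypothesised linear decomposition, this equals $F(u;\alpha(u))$. Specialising to $u=\tilde u_N$ gives
\[
\tilde u_N = G(\tilde u_N) = F(\tilde u_N;\alpha(\tilde u_N)),
\]
so $\tilde u_N$ is a fixed point of the RB operator $F(\cdot;\alpha(\tilde u_N))$. By the definition of $V_N$ this yields $\tilde u_N=u_{\alpha(\tilde u_N)}\in V_N$, completing the proof.

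The only substantive step is the passage $G(u)=F(u;0)+F(0;\alpha(u))=F(u;\alpha(u))$, which is a direct consequence of the linearity hypothesis on $F$ in $(u,\alpha)$; no genuine obstacle arises. One point I would be careful to highlight is that uniqueness of $\tilde u_N$ is uniqueness as a fixed point of $G$ in $H$ (hence in particular in $V_N$), not uniqueness of the parameter $\alpha(\tilde u_N)$, which need not hold unless the parametrisation $\alpha\mapsto u_\alpha$ is injective.
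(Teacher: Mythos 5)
Your proof is correct and follows essentially the same route as the paper's: apply the Banach Fixed Point Theorem to the contraction $G$ on $H$, then use $G(\tilde{u}_N)\in W(\tilde{u}_N)$ together with the decomposition $F(u;\alpha)=F(u;0)+F(0;\alpha)$ to exhibit $\tilde{u}_N$ as a fixed point of some $F(\cdot;\alpha)$, hence an element of $V_N$. Your closing remark distinguishing uniqueness of the fixed point from uniqueness of the parameter $\alpha$ is a sensible clarification that the paper leaves implicit.
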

\begin{proof}
 As $G$ is contractive there exists a unique $\tilde{u}_N\in H$ such that $\tilde{u}_N=G(\tilde{u}_N)$.
 As $\tilde{u}_N\in W(\tilde{u}_N)$ there exists an $\alpha\in\R^M$ such 
that $\tilde{u}_N=F(\tilde{u}_N,\alpha)$. Thus $\tilde{u}_N\in V_N$.
\end{proof}

Thus the \emph{collage fit} $\tilde{u}_N\in V_N$ is defined to be the fixpoint of $G$. Note that 
this is an approximation of $\hat{u}$ which is in $V_N$, it is,however, in general different from 
the Ritz approximation $\hat{u}_N$. Nonetheless it is also a quasi-optimal approximation:
\begin{proposition}[quasi-optimality of collage fit]
  Let $\tilde{u}_N$ be the collage fit for the quadratic form $\Psi$ as 
  defined in Corollary~\ref{cor:62}. If all the assumptions of this corollary hold and if
  $\gamma$ and $c$ are as defined in this corollary, then one has
  $$\|\tilde{u}_N - \hat{u}\| \leq \frac{1/c+1}{1/\gamma-1}\, \|u_N - \hat{u}\|, \quad 
    \text{for all $u_N\in V_N$.} $$
\end{proposition}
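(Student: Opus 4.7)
The plan is to combine the contractivity of $G$ (established in Proposition~\ref{prop:61}) with the best-approximation property of $G(u)$ viewed as a Ritz-type projection onto the affine space $W(u)$, then conclude via a triangle inequality and a rearrangement.

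The first step is to observe that any $u_N\in V_N$ automatically lies in $W(u_N)$. Indeed, $u_N$ is the fixed point of some $F(\cdot;\alpha)$, so $u_N = F(u_N;0) + F(0;\alpha)\in F(u_N;0)+W = W(u_N)$. Since $G(u_N)$ minimises the quadratic energy functional $\Psi$ over the affine subspace $W(u_N)$, the standard Galerkin-orthogonality argument shows that $G(u_N)-\hat u$ is $a(\cdot,\cdot)$-orthogonal to $W$, hence $G(u_N)$ is the $\|\cdot\|_E$-best approximation to $\hat u$ from $W(u_N)$. Combining this with $u_N\in W(u_N)$ and the norm equivalence \eqref{eq:BDequivalence} yields
\[
\|G(u_N)-\hat u\| \;\leq\; \tfrac{1}{c_1}\,\|G(u_N)-\hat u\|_E \;\leq\; \tfrac{1}{c_1}\,\|u_N-\hat u\|_E \;\leq\; \tfrac{c_2}{c_1}\,\|u_N-\hat u\|.
\]

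Next, I would use that $\tilde u_N = G(\tilde u_N)$ together with the contractivity constant $\gamma$ of $G$ from Proposition~\ref{prop:61}:
\[
\|\tilde u_N - G(u_N)\| \;=\; \|G(\tilde u_N)-G(u_N)\| \;\leq\; \gamma\,\|\tilde u_N - u_N\| \;\leq\; \gamma\bigl(\|\tilde u_N-\hat u\|+\|\hat u - u_N\|\bigr).
\]
A triangle inequality then gives
\[
\|\tilde u_N-\hat u\| \;\leq\; \|\tilde u_N - G(u_N)\| + \|G(u_N)-\hat u\| \;\leq\; \gamma\,\|\tilde u_N-\hat u\| + \Bigl(\gamma+\tfrac{c_2}{c_1}\Bigr)\|u_N-\hat u\|.
\]
Since $\gamma<1$, I can rearrange to obtain
\[
\|\tilde u_N - \hat u\| \;\leq\; \frac{\gamma + c_2/c_1}{1-\gamma}\,\|u_N-\hat u\|.
\]
The final step is the algebraic simplification using $\gamma = c\,c_2/c_1$, whence $c_2/c_1 = \gamma/c$ and
\[
\frac{\gamma+\gamma/c}{1-\gamma} \;=\; \frac{1/c+1}{1/\gamma-1},
\]
which is the stated bound.

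The only subtle point, and hence the main obstacle to be careful about, is the justification of the best-approximation inequality $\|G(u_N)-\hat u\|_E\leq \|u_N-\hat u\|_E$: it requires both that $u_N$ lies in $W(u_N)$ (which is where the self-referentiality built into $V_N$ is essential) and that minimising $\Psi$ over the affine space $W(u_N)$ really produces the $\|\cdot\|_E$-closest element to the global minimiser $\hat u$. Once that is in place, the rest is a standard Strang-type estimate.
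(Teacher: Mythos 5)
Your proof is correct and reaches exactly the stated constant, but it is organised differently from the paper's argument. The paper never introduces $G(u_N)$: it fixes the $\alpha$ with $u_N=F(u_N;\alpha)$, uses the minimality of $\tilde{u}_N$ over $W(\tilde{u}_N)$ to bound $\|\tilde{u}_N-\hat{u}\|_E$ by $\|F(\tilde{u}_N;\alpha)-\hat{u}\|_E$, and then expands $F(\tilde{u}_N;\alpha)-\hat{u}$ by a triangle inequality through $F(\hat{u};\alpha)$ and $F(u_N;\alpha)=u_N$, invoking the contractivity of $F(\cdot;\alpha)$ twice before kicking back. You instead split at $G(u_N)$, use that $u_N\in W(u_N)$ together with the best-approximation property of $G(u_N)$ in the energy norm to get $\|G(u_N)-\hat{u}\|\leq (c_2/c_1)\|u_N-\hat{u}\|$, and then invoke the contractivity of $G$ with constant $\gamma$ from Proposition~\ref{prop:61} as a black box. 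Your route is more modular (it reuses the already-proved contraction estimate for $G$ rather than redoing the work at the level of $F$), and the observation that every element of $V_N$ lies in its own collage space $W(u_N)$ is a clean way to expose where self-referentiality enters; the paper's route is more self-contained and makes the role of the particular parameter $\alpha$ attached to $u_N$ explicit. Since $c_2/c_1=\gamma/c$, both decompositions collapse to the identical bound $\frac{\gamma(1+1/c)}{1-\gamma}=\frac{1/c+1}{1/\gamma-1}$, and your identification of the best-approximation step (Galerkin orthogonality of $G(u_N)-\hat{u}$ to $W$ for the symmetric form $a$) as the one point needing care is exactly right --- it is the same fact the paper relies on, applied at $W(\tilde{u}_N)$ instead of $W(u_N)$.
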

\begin{proof}
Let $u_N\in V_N$ and $\alpha\in\R^M$ such that $u_N=F(u_N;\alpha)$. As $\tilde{u}_N$ minimises the
energy norm in $W(\tilde{u}_N)$ one has
\begin{align*}
  c_1 \|\tilde{u}_N-\hat{u}\| & \leq \|\tilde{u}_N-\hat{u}\|_E \\ 
  & \leq \|F(\tilde{u}_N;\alpha)-\hat{u}\|_E \\
  & \leq c_2 \|F(\tilde{u}_N;\alpha)-\hat{u}\|_E.
\end{align*}
By the triangle inequality and contractivity of $F(\cdot;\alpha)$ one has
\begin{align*}
  \|F(\tilde{u};\alpha)-\hat{u}\| & \leq \|F(\tilde{u};\alpha)-F(\hat{u};\alpha)\| 
                + \|F(\hat{u};\alpha)-F(u_N;\alpha)\| + \|u_N-\hat{u}\| \\
  & \leq c\|\tilde{u}-\hat{u}\|+c\|\hat{u}-u_N\|+\|u_N-\hat{u}\|
\end{align*}
and thus
$$\|\tilde{u}-\hat{u}\|\leq \frac{c_2}{c_1}((1+c)\|u_N-\hat{u}\|+c\|\tilde{u}-\hat{u}\|). $$
The claimed inequality follows directly. 
\end{proof}

We then compute the collage fit $\tilde{u}_N$ iteratively using the fixpoint algorithm for 
$G$:
\vskip 5pt
\noindent
\textbf{Collage Fitting Algorithm}

\begin{itemize}
\itemsep1pt\parskip0pt\parsep0pt
\item First choose some $u^{(0)}$.
\item Then repeat for all $k=0,1,2,\ldots$
  $$u^{(k+1)} = G(u^{(k)})$$
\end{itemize}

The algorithm converges because of the contractivity of the operator $G$. Applications of this
algorithm include quasi-optimal approximations of $L_2$ functions by classes of fractal functions.
In practice we found that these approximations are very close to the best $L_2$ approximations.
Other applications are the computation of fractal approximations to the solution of Fredholm
integral equations of the first kind using Tikhonov regularisation. Finally, this approach can
also be used to solve elliptic PDEs numerically with fractal functions. More details on these
applications will be provided in a forthcoming paper.

\subsection{Evaluation of functions on grids}
Grids are very important objects for numerical computations. They are a
collection of points where during the computations one needs to compute
the unknown function in order to get the value at the points one is
interested in.

In the simple case of local interpolation one requires just neighboring
points. However, if one would like to solve a PDE one needs a whole
field.

In the end, however, the values of interest are a function of a certain
collection of values at other points. This is a type of
self-referentiality and we now proceed to define self-referential grids. This approach
is based on a upcoming paper by Barnsley et al. \cite{BHM} on the computation of
function values. Here we only consider the simple case discussed above
where we have an IFS with two functions and our functions are defined
over $[0,1]$.

To this end, consider $x\in [0,1]$. Then $x$ has a numerical representation of the form 
$$
x=0.d_1 d_2 d_3 \ldots d_J,\quad d_i \in \{0,1\}.
$$
Let functions $l_i : [0,1] \rightarrow [0,1]$ be given by
\[ 
l_0(x) := x/2, \quad \text{and} \quad l_1(x) := (x+1)/2. 
\]
Then $x$ is defined by the recursion
\[  
x^{(0)} = 0, \quad x^{(k+1)} = l_{d_{J-k}}(x^{(k)}), \quad k=0,\ldots, J. 
\]
If $f(x)$ is the vector of derivatives of some polynomial evaluated at
$x$ then one may use the recursion
\[ 
y^{(0)} = f(0), \quad y^{(k+1)} = W_{d_{J-k}} y^{(k)} + b_{d_{J-k}} 
\]
to obtain $y = f(x)$. This is essentially the method of function
evaluation discussed in \cite{BHM}.

Here we note that in order to obtain the value of $f$ at the point $x$
one requires the values of $f$ on all $x^{(k)}$. This is the ``grid''
required to determine $f(x)$. This ``grid'' is nothing else but the
path of the shift function $\sigma$ starting at point $x$ where 
\[
  \sigma(0.d_1 d_2 d_3 \ldots) = 0.d_2 d_3 \ldots.
\]

More generally, we define a \textbf{self-referential grid} $\gamma$ as a
finite set of points $\{0,1\} \subset \gamma \subset [0,1]$ such that 
\[
  \gamma \subset l_0(\gamma) \cup l_1(\gamma).
\] We now have

\begin{proposition}

A self-referential grid $\gamma$ is invariant under $\sigma$, i.e.,
\[ 
\sigma(\gamma) \subset \gamma.
\]
\end{proposition}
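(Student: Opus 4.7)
The plan is to unfold the defining inclusion $\gamma \subset l_0(\gamma) \cup l_1(\gamma)$ and exploit the fact that the two branch maps $l_0$ and $l_1$ are one-sided inverses of the shift $\sigma$ on binary expansions.

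First I would observe that for any $y\in[0,1]$ with binary expansion $y = 0.d_1 d_2 d_3\ldots$, the maps $l_0$ and $l_1$ act by prepending a digit: $l_0(y) = 0.0\,d_1 d_2 \ldots$ and $l_1(y) = 0.1\,d_1 d_2 \ldots$. Applying $\sigma$ then strips this prepended digit, giving $\sigma(l_0(y)) = y$ and $\sigma(l_1(y)) = y$. So $\sigma \circ l_i = \mathrm{id}$ for $i=0,1$.

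Next, given any $x\in\gamma$, the self-referentiality hypothesis $\gamma\subset l_0(\gamma)\cup l_1(\gamma)$ produces some $y\in\gamma$ and some $i\in\{0,1\}$ with $x = l_i(y)$. Applying the identity above gives $\sigma(x) = \sigma(l_i(y)) = y \in\gamma$. Since $x$ was arbitrary, $\sigma(\gamma)\subset\gamma$.

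The only subtle point is that dyadic rationals have two binary expansions (for example $0.1000\ldots = 0.0111\ldots$), so one must be slightly careful about which representative of $x$ is being shifted. This is easily handled by taking the canonical terminating expansion for points of the form $k/2^J$ and observing that both $l_0$ and $l_1$ send terminating expansions to terminating expansions, so the prepend/strip identity $\sigma\circ l_i = \mathrm{id}$ is unambiguous on the relevant orbits; this is the one step I would write out carefully, but it is not a genuine obstacle.
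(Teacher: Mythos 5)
Your argument is correct and is essentially the paper's own proof: unfold $\gamma\subset l_0(\gamma)\cup l_1(\gamma)$ to write $x=l_i(z)$ with $z\in\gamma$, note that $l_i$ prepends a binary digit which $\sigma$ then strips, and conclude $\sigma(x)=z\in\gamma$. Your extra remark about the two binary expansions of dyadic rationals is a small refinement the paper glosses over, but it does not change the structure of the argument.
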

\begin{proof}
As $\gamma$ is self-referential there exists for every $x\in \gamma$ a
$z\in \gamma$ such that $x=l_i(z)$, for some $i\in\{0,1\}$.

If $z= 0.d_1 d_2 \ldots$ then $l_0(z) = 0.0d_1 d_2 \ldots$ and
$l_1(z) = 0.1 d_1 d_2 \ldots$. In both cases one has $z=\sigma(x)$. Thus,
 $\sigma(x)\in \gamma$ and we have shown that for any $x\in\gamma$, 
$\sigma(x)\in\gamma$. 
\end{proof}

Hence, we now can define for any finite set $M\subset [0,1]$ a
self-referential grid $\gamma_M = \bigcup_k \sigma^k(\gamma)$. If we
know the IFS we can then determine the values on all the points of
$\gamma_M$ recursively (as outlined above). In particular, one then also obtains the
values on the set $M$. One could use this for multiscale modelling where
one models fine scale behaviour on just a small subset of a very fine
grid and uses self-referentiality to get the overall solution.

\subsection{Subdivision schemes}

Subdivision schemes are widely used in computer graphics for modelling curves and
surfaces. An introduction and survey of the mathematics of subdivision schemes
can be found in~\cite{CavDM91,DynL02,MicP89,PraM87,PraBP02}. A subdivision scheme is
a collection of mappings (called refinement rules)
$R_k: V_k \rightarrow V_{k+1}$ between linear spaces $V_k$ of real
functions defined on nested meshes (at most countable sets of isolated points)
$N_0 \subset N_1 \subset \cdots \subset \R^s$. 

Iterated function systems (and LIFSs) provide a rich source of subdivision schemes.
For example, consider an IFS with $\X=[0,1)$, $N=2$ and
$$  u_1(x)  = \frac{1}{2} x \quad \text{and} \quad u_2(x) = \frac{1}{2} + \frac{x}{2} $$
for $x\in [0,1)$. Furthermore, let $v_i(x,y)$ be continuous for $i=1,2$ with
$v_1(1,y) = v_2(0,y)$ for $y\in\R$. One then obtains a subdivision scheme with meshes
$N_k = 2^{-k}\Z_{2^k}$ and $\Z_{2^k} = \{0,\ldots, 2^k-1\}$ by choosing the refinement
rules $R_k : \R^{N_k} \rightarrow \R^{N_{k+1}}$ to be
$$
  (R_k f) (\xi) =
\begin{cases}
  v_1(2\xi, f(2\xi)), \quad \xi \in [0,1/2) \cap N_{k+1} \\
  v_2(2\xi-1, f(2\xi-1)), \quad \xi \in [1/2,1) \cap N_{k+1}.
\end{cases}
$$
Note that these rules are well defined as $2\xi \in N_k$ in the first case and $2\xi-1\in N_k$
in the second case. An important question regards the convergence of subdivision schemes to
continuous functions. In the examples generated by LIFSs one obtains this convergence 
directly from the convergence of the LIFS itself.

Subdivision algorithms -- like LIFSs -- are used to generate the values of 
graphs of functions. Generalising the concept of polyomial fractals
discussed in the previous section, one now may obtain LIFSs from the common
subdivision schemes, see also the book by Prautzsch et al~\cite{PraBP02}
for a different angle of this discussion based on B\'ezier splines. More specifically, 
Micchelli and Prautzsch \cite{PraM87,MicP89} discuss refinement algorithms which use a
refined basis based on uniform subdivision. They present the unified
structure of a large class of smoothing methods. In particular, they
show that the obtained curves are \textbf{uniformly refinable} or
self-referential in the sense that the curve may be patched together
from scaled subcomponents of itself. This fundamentally defines a
local IFS and, in particular, generalizes methods used for Bezier curves
which are based on polynomials.


\section{Conclusions and Final Remarks}

We have demonstrated that fractal functions defined by local iterated function systems can be used for computations.
In fact, many known methods including piecewise polynomial approximation and wavelets and more generally subdivision
schemes can be described within the fractal framework because the underlying components (the polynomials and wavelets) 
have a fractal nature. 

While this fractal nature has been observed in particular in the subdivision and wavelet literature, one observes 
that even some of the newest numerical approximation 
schemes do have a fractal nature. As an illustration thereof, we consider here the QTT (quantized tensor train) method. It considers functions
which can be represented by matrix products of the form
$$ 
   f(x) = \sum_{\alpha_1,\ldots,\alpha_d} g_1(i_1,\alpha_1) \prod_{k=2}^{d-1} g_k(\alpha_k,i_k,\alpha_{k+1})\, 
            g_d(\alpha_d, i_d),
$$
where $x$ has the binary representation
$$ x = \sum_{k=1}^d i_k 2^{k-1}. $$
The approximation of functions using their binary digits in this way was motivated by the work on high-dimensional
approximation and quantum mechanics. QTT was introduced by Oseledets in~\cite{Ose09}. The summation ranges of the
indices $\alpha_k=1,\ldots,r_k$ are defined by the tensor train ranks $r_k$. For computational efficiency it is
important that these ranks are small. Except for special cases (the exponential function, trigonometric functions
and piecewise polynomials) little is known~\cite{Kho11,Gra10,Ose13} about which functions can be approximated by QTT 
functions with low ranks. We briefly remark that fractals admit such a representation. This demonstrates
that the fractal framework considered here is also useful for the analysis of the QTT method.

Consider in particular a fractal function defined by
\begin{align*}
  f(x/2) & = \lambda_1 + S_1 f(x) \\
  f(x/2+1/2) & = \lambda_2 + S_2 f(x).
\end{align*}
Let $x$ have the binary representation with binary digits $i_1, i_2, \ldots$ as above and let
$$ y = \sum_{k=1}^{d-1} i_{k+1} 2^{k-1}. $$
Then the recursion for the fractal function can be rewritten as
$$ f(x) = \lambda_{i_1+1} + S_{i_1+1}\, f(y) $$
or in matrix form as
$$ f(x) = 
\begin{bmatrix} 1 & 0 \end{bmatrix}
\begin{bmatrix} S_{i_1+1} & \lambda_{i_1+1}\\ 0 & 1 \end{bmatrix} \begin{bmatrix} f(y) \\ 1 \end{bmatrix}. $$
If one now iterates this for $f(y)$ one gets the factorisation
$$ f(x) = \begin{bmatrix} 1 & 0 \end{bmatrix} \prod_{k=1}^d
       \begin{bmatrix} S_{i_k+1} & \lambda_{i_k+1}\\ 0 & 1 \end{bmatrix} 
       \begin{bmatrix} f(0) \\ 1 \end{bmatrix}. $$
This provides an explicit QTT representation for the fractal function $f(x)$ and shows that these fractal functions
have QTT rank 2. Note, however, that we have only considered a function class with 4 parameters $\lambda_i$ and $S_i$.
Rank 2 QTT functions allow the parameters to depend on the levels or position of digits. This can also be discussed
in the fractal framework and will be considered in future works as will local IFSs.

\section*{Acknowledgments}
The third author wishes to thank the Mathematical Sciences Institute of The Australian National University for its kind hospitality and support during his visit in May 2013.

\end{document}